\newtheorem{theorem}{Theorem}
\newtheorem{remark}[theorem]{Remark}
\newtheorem{proposition}[theorem]{Proposition}
\newtheorem{lemma}[theorem]{Lemma}
\let\pa\partial
\let\na\nabla
\let\r\rho
\let\eps\varepsilon
\newcommand{\R}{\mathbb{R}}
\newcommand{\N}{\mathbb{N}}
\newcommand{\diver}{\textnormal{div}}
\begin{document}
\title[Preventing blow up in the Keller-Segel model]{Cross diffusion and
nonlinear diffusion preventing blow up in the Keller-Segel model}

\author[J. A. Carrillo]{Jos\'e Antonio Carrillo}
\address{ICREA and Departament de Matem\`atiques, Universitat Aut\`onoma de Barcelona,
08193 Bellaterra (Barcelona), Spain. {\it On leave from:}
Department of Mathematics, Imperial College London, London SW7 2AZ, UK.}
\author[S. Hittmeir]{Sabine Hittmeir}
\address{Department of Applied Mathematics and Theoretical Physics, Wilberforce Road,
Cambridge CB3 0WA, UK}
\author[A. J\"ungel]{Ansgar J\"ungel}
\address{Institute for Analysis and
Scientific Computing, Vienna University of Technology,
Wiedner Hauptstr.~8-10, 1040 Wien, Austria}

\thanks{The authors have been partially supported by the bilateral
Austrian-Spanish Project ES 08/2010-AT2009-0008 of the Austrian
Exchange Service (\"OAD) and MICINN. The work of SH is supported
by the King Abdullah University of Science and Technology (KAUST),
grant KUK-I1-007-43. SH and AJ acknowledge partial support from
the Austrian Science Fund (FWF), grants P20214, P22108, and I395,
from the Austrian-French Project FR 07/2010, and from the
Austrian-Croatian Project HR 01/2010 of the \"OAD. JAC was
partially supported by the Ministerio de Ciencia e Innovaci\'on,
grant MTM2011-27739-C04-02, and by the Ag\`encia de Gesti\'o
d'Ajuts Universitaris i de Recerca-Generalitat de Catalunya, grant
2009-SGR-345.}

\begin{abstract}
A parabolic-parabolic (Patlak-) Keller-Segel model in up to three
space dimensions with nonlinear cell diffusion and an additional
nonlinear cross-diffusion term is analyzed. The main feature of
this model is that there exists a new entropy functional, yielding
gradient estimates for the cell density and chemical
concentration. For arbitrarily small cross-diffusion coefficients
and for suitable exponents of the nonlinear diffusion terms, the
global-in-time existence of weak solutions is proved, thus
preventing finite-time blow up of the cell density. The global
existence result also holds for linear and fast diffusion of the
cell density in a certain parameter range in three dimensions.
Furthermore, we show
$L^\infty$ bounds for the solutions to the parabolic-elliptic system.
Sufficient conditions leading to the asymptotic stability
of the constant steady state are given for a particular choice of
the nonlinear diffusion exponents. Numerical experiments in two and three
space dimensions illustrate the theoretical results.
\end{abstract}

\keywords{Chemotaxis, Keller-Segel model, cross-diffusion, degenerate diffusion,
global existence of solutions, blow up.}

\subjclass[2000]{35K55, 35K65, 35Q80, 78A70, 92C17.}

\maketitle


\section{Introduction}

Patlak \cite{Pat53} and Keller and Segel \cite{KeSe70} have proposed
a partial differential equation model,
which describes the movement of cells in response to a
chemical signal. The cells move towards regions of higher signal concentrations.
As the cells produce the signal substance, the movement may
lead to an aggregation of cells. The more cells are aggregated, the more the
attracting chemical signal is produced by the cells. This process is counter-balanced
by cell diffusion, but if the cell density is sufficiently large, the nonlocal chemical
interaction dominates and results -- in two and three space dimensions -- in a blow
up of the cell density (see the reviews \cite{HiPa09,Hor03} for details). Denoting by
$\rho=\rho(x,t)$ the cell density and by $c=c(x,t)$ the concentration of the
chemical signal, the Keller-Segel model, in its general form, is given by
\begin{align*}
  \pa_t\rho &= \diver(D(\rho)\na\rho - \chi(\rho)\na c) + R_1(\rho,c), \\
  \alpha \pa_t c &= \Delta c + R_2(\rho,S), \quad x\in\Omega,\ t>0,
\end{align*}
where $\Omega\subset\R^d$ ($d\ge 1$) is a bounded domain,
$D(\rho)$ is the cell diffusivity, $\chi(\rho)$ the chemotactic
sensitivity, and $R_1(\rho,c)$ and $R_2(\rho,c)$ describe the
production and degradation of the cell density and chemical
substance, respectively. Here, $\alpha=0$ corresponds to
the parabolic-elliptic case and $\alpha=1$ to the fully parabolic
problem. The equations are supplemented by homogeneous
Neumann boundary and initial conditions:
\begin{align*}
  D(\rho)(\na\rho \cdot\nu) = \na c\cdot\nu = 0 &\quad\mbox{on }\pa\Omega,\ t>0, \\
  \rho(\cdot,0) = \rho_0, \quad \alpha c(\cdot,0) = \alpha c_0 &\quad\mbox{in }\Omega,
\end{align*}
where $\nu$ denotes the exterior unit normal to the boundary
$\pa\Omega$ (which is assumed to exist).
The classical Keller-Segel model consists in the
choice $D(\rho)=1$, $\chi(\rho)=\rho$, $R_1(\rho,c)=0$, and
$R_2(\rho,c)=\rho-c$.

Motivated by numerical and modeling issues, the question how blow
up of cells can be avoided has been investigated intensively the
last years. Up to our knowledge, four methods have been proposed.
In the following, we review these methods.

The first idea is to modify the chemotactic sensitivity. Supposing
that aggregation stops when the cell density reaches the maximal
value $\rho_\infty=1$, one may choose $\chi(\rho)=1-\rho$. In this
volume-filling case, the cell density is bounded, $0\le\rho\le 1$,
and the global existence of solutions can be proved \cite{DiRo08}.
Furthermore, if $\chi(\rho)=\rho^\beta$ with $0<\beta<2/d$, the
solutions are global and bounded, thus preventing finite-time blow
up \cite{HoWi05}. Global solutions are also obtained when the
sensitivity depends on the chemical concentrations in an
appropriate way, see, e.g., \cite{Bil99,HPS07}.

A second method consists in modifying the cell diffusion. In the
context of the volume-filling effect, Burger et al.\ \cite{BDD06}
suggested the cell equation $\pa_t\rho =
\diver(\rho(1-\rho)\na(\rho-c))$. Then the parabolic-elliptic
model possesses global solutions. Global existence results can be
achieved by employing the nonlinear diffusion
$D(\rho)=\rho^\alpha$, which models the local repulsion of cells.
When $\int_1^\rho (D(s)/s)ds$ grows faster than $\log\rho$ for
large $\rho$, a priori estimates showing that solutions are global
and uniformly bounded in time were obtained in
\cite{CaCa06,Kow05}. Adding the nonlinear sensitivity
$\chi(\rho)=\rho^{\beta}$ with $\alpha\ge 1$ and $2\le
\beta<\alpha+2/d$, global existence results were achieved in
\cite{IsYo11}. The solutions are uniformly bounded in time if
$\alpha>2-4/d$ \cite{KoSz08}. The existence of global bounded
classical solutions to a fast-diffusion Keller-Segel model with
$D(\rho)=(1-\rho)^{-\alpha}$, where $\alpha\ge 2$, has been proved
in \cite{ChWa10}. The same result holds true when we choose
$\chi(\rho)=\rho(1-\rho)^\beta$ with $\beta\ge 1-\alpha/2$, and
the solution is still global in time (but possibly not classical)
if $\beta\ge 1-\alpha$ \cite{Wrz10}.

A third approach is to consider nonvanishing growth-death models $R_1\neq 0$,
since one may expect that a suitable death term avoids cell aggregation.
Indeed, taking $R_1(\rho,c)=\rho(1-\rho)(\rho-a)$ for some $0\le a\le 1$, the global
existence of solutions is proved in \cite{BHLM96}. In the logistic-growth
model $R_1(\rho,c)=\rho(1-\rho^{\gamma-1})$, a global weak solution exists
for all $\gamma>2-1/d$ \cite{Win08}. These results have been obtained
for the parabolic-elliptic model.

Recently, a fourth way to obtain global existence of solutions has been proposed
\cite{HiJu11}. The idea is to add a cross-diffusion term
in the equation for the chemical signal:
\begin{align*}
  \pa_t\rho &= \diver(\na\rho - \rho\na c), \\
  \alpha \pa_t c &= \Delta c + \delta \Delta\rho + \rho - c \quad\mbox{in }\Omega, \ t>0,
\end{align*}
where $\delta>0$. At first sight, the additional cross-diffusion term seems to
cause several mathematical difficulties since the diffusion matrix of the
above system is neither symmetric nor positive definite, and we cannot
apply the maximum principle to the equation for the chemical signal anymore.
All these difficulties can be resolved by the observation that the above system
possesses a logarithmic entropy,
$$
  E_0(\rho,c) = \int_\Omega \left[\rho(\log\rho-1) + \alpha \frac{c^2}{2\delta}\right]dx,
$$
allowing for global existence
results and revealing some interesting structural properties of the system.
In fact, the entropy production equation
$$
  \frac{dE_0}{dt} + \int_\Omega\Big(4|\na\sqrt{\rho}|^2 + \frac{1}{\delta}|\na c|^2
  + \frac{1}{\delta}c^2\Big)dx = \frac{1}{\delta}\int_\Omega \rho c dx
$$
and suitable Gagliardo-Nirenberg estimates for the right-hand side
lead to gradient estimates for $\sqrt{\rho}$ and $c$. Another motivation for
the introduction of the additional cross diffusion is that,
whereas finite-element discretizations of the classical Keller-Segel model break
down some time before the blow up, the numerical solutions to the augmented model
exists for all time, which may lead to estimates of the blow-up time. This
question is currently under investigation.

In \cite{HiJu11}, the existence of global weak solutions has been
proved in the two-dimen\-sio\-nal situation only. In this paper,
we generalize this result to three space dimensions by allowing
for nonlinearities in the cell diffusion terms. Since nonlinear
diffusion in the cell equation helps to achieve global existence
results (see above), we suggest, in contrast to \cite{HiJu11}, a
{\em nonlinear} cross-diffusion term. More precisely, we consider
the equations
\begin{align}
  \pa_t\rho &= \diver(\na(\rho^m) - \rho\na c), \label{1.rho} \\
  \alpha \pa_t c &= \Delta c + \delta\Delta(\rho^n) + \rho - c
  \quad\mbox{in }\Omega,\ t>0,
  \label{1.c}
\end{align}
subject to the no-flux and initial conditions
\begin{align}
  (\na(\rho^m)-\rho\na c)\cdot\nu
  = \na(c+\delta\rho^n)\cdot\nu = 0 &\quad\mbox{on }\pa\Omega,\ t>0, \label{1.bc} \\
  \rho(\cdot,0) = \rho_0, \ \alpha c(\cdot,0) = \alpha c_0 &\quad\mbox{in }\Omega. \label{1.ic}
\end{align}
Notice that these boundary conditions are equivalent to
$\nabla\rho\cdot\nu=\nabla c\cdot \nu=0$ on $\pa\Omega$
for smooth positive solutions.

In two space dimensions, the case $m=n=1$ is covered by
\cite{HiJu11}. If $m>3-4/d$, $2\leq d\leq 3$, the nonlinear
diffusion already prevents blow-up of the solutions without
additional cross diffusion, see \cite{KimYao,Kow05,KoSz08}. The question
remains if we can allow for linear and fast diffusion of cells,
$m\leq 1$, for some $n>1$, and still obtain global existence
results. In this paper, we show that this is indeed true. For
instance, we show that in the presence of the additional cross
diffusion term and in three space dimensions, we can allow for the
classical cell diffusion exponent $m=1$ and still obtain global
existence results. This shows that the result of \cite{HiJu11} can
be generalized to the three-dimensional case if the cross
diffusion is of degenerate type. These remarks motivate us to
restrict ourselves to the case $m>0$ and $n>1$. Our first main result is
as follows.

\begin{theorem}[Global existence]\label{thm.ex}
Let $\Omega\subset\R^d$ $(1\leq d\le 3)$ be a bounded domain with
$\pa\Omega\in C^{1,1}$. Let $\alpha\ge 0$, $m>0$, $n>1$, and let $p=(m+n-1)/2$
satisfy $1-n/d<p\le\min\{m,n\}$. Furthermore, let
$0\le\rho_0\in L^n(\Omega)$ and $\alpha c_0\in L^2(\Omega)$. Then
there exists a global weak solution $(\rho,c)$ to
\eqref{1.rho}-\eqref{1.ic} satisfying $\rho\ge 0$ in $\Omega$,
$t>0$, and, for some $s\in(1,2]$,
\begin{align*}
  & \rho\in L^\infty_{\rm loc}(0,\infty;L^n(\Omega))\cap
  L^{2Q}_{\rm loc}(0,\infty;L^{2Q}(\Omega)), \\
  & \rho^m,\ \rho^n\in L^s_{\rm loc}(0,\infty;W^{1,s}(\Omega)), \quad
  \rho\na c\in L^s_{\rm loc}(0,\infty;L^s(\Omega)), \\
  & \alpha c\in L^\infty_{\rm loc}(0,\infty;L^2(\Omega)), \quad
  c\in L^2_{\rm loc}(0,\infty;H^1(\Omega)), \\
  & \pa_t\rho,\ \alpha \pa_t c\in L^s_{\rm loc}(0,\infty;(W^{1,s}(\Omega))'),
\end{align*}
where $Q=n/d+p>1$.
\end{theorem}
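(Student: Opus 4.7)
The plan is to follow the standard three-step strategy for cross-diffusion Keller-Segel systems: build an approximating regularized problem, derive uniform a priori estimates from a suitable entropy functional, and pass to the limit via Aubin-Lions compactness.

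For the approximation, I would combine an implicit time discretization of step $\tau$ with a higher-order elliptic regularization of order $\varepsilon$. On each time slice one solves the elliptic system obtained by adding $\varepsilon((-\Delta)^k u + u)$ to both equations, with $k$ large enough that $H^k(\Omega) \hookrightarrow L^\infty(\Omega)$, and by smoothly truncating the nonlinearities $\rho^m, \rho^n$ away from the origin. Existence of a solution to the regularized problem follows from a Leray-Schauder fixed-point argument using the coercivity provided by the regularization. Non-negativity of $\rho$ is then deduced by testing with the negative part $\rho_-$, noting that the chemotactic flux $\rho\nabla c$ vanishes on $\{\rho=0\}$.

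The core of the proof is the entropy estimate. Motivated by $E_0$ from the introduction and by the stated bound $\rho\in L^\infty_{\rm loc}(0,\infty;L^n(\Omega))$, I would work with
\[
E(\rho,c) = \int_\Omega\Big[\frac{\rho^n}{n(n-1)} + \frac{\alpha c^2}{2\delta}\Big]\,dx.
\]
Testing \eqref{1.rho} with $\rho^{n-1}/(n-1)$ produces the diffusion dissipation $\frac{m}{p^2}\|\nabla\rho^p\|_{L^2}^2$, with the exponent $p=(m+n-1)/2$ emerging automatically from $\rho^{m-1}\rho^{n-2}|\nabla\rho|^2 = \rho^{2(p-1)}|\nabla\rho|^2$. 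Testing \eqref{1.c} with $c/\delta$ produces $\delta^{-1}\|\nabla c\|_{L^2}^2$ and $\delta^{-1}\|c\|_{L^2}^2$. After summing, the surviving cross-diffusion contribution is $-(1-1/n)\int\nabla\rho^n\cdot\nabla c\,dx$, and the source $\int\rho c\,dx$ is handled by Young. Writing $\nabla\rho^n = (n/p)\rho^{n-p}\nabla\rho^p$ and using Hölder together with a Gagliardo-Nirenberg interpolation between $\|\nabla\rho^p\|_{L^2}$ and $\|\rho\|_{L^n}$, this cross term can be absorbed by the dissipations. The constraint $p\le\min\{m,n\}$ makes the interpolation exponents admissible, and the constraint $1-n/d<p$ is exactly the threshold under which Gagliardo-Nirenberg yields $\rho \in L^{2Q}$ with $Q=n/d+p>1$, the space-time integrability stated in the theorem.

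From these bounds, interpolating with $p\le\min\{m,n\}$ gives $\nabla\rho^m, \nabla\rho^n \in L^s$ for some $s>1$, Hölder gives $\rho\nabla c\in L^s$, and the equations produce $\partial_t\rho, \alpha\partial_t c\in L^s(0,T;(W^{1,s}(\Omega))')$. Aubin-Lions then yields strong convergence of $\rho$ and $c$ as $(\varepsilon,\tau)\to 0$, which suffices to pass to the limit in every nonlinear term and produce a weak solution of \eqref{1.rho}-\eqref{1.ic}. The main obstacle is the absorption step in the entropy estimate: one must verify that the exact balance between $m,n,p,d$ in the hypotheses allows the Gagliardo-Nirenberg interpolation to close the cross-term bound, with the restriction to $d\le 3$ entering precisely through the Sobolev embedding used there. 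A secondary technical point is ensuring that the two-level regularization preserves the entropy structure well enough that the dissipation inequality survives the limits $\varepsilon\to 0$ and $\tau\to 0$.
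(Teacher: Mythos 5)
The central step of your argument breaks down at the entropy estimate. You test the $\rho$-equation with $\rho^{n-1}/(n-1)$, corresponding to the entropy density $\rho^n/(n(n-1))$, and are then left with a residual cross term $(1-1/n)\int_\Omega\na\rho^n\cdot\na c\,dx$ that you propose to absorb by H\"older and Gagliardo--Nirenberg. This absorption cannot work: writing $\na\rho^n=(n/p)\rho^{n-p}\na\rho^p$, the only dissipation available is $\|\na\rho^p\|_{L^2(\Omega)}^2$ together with $\rho\in L^\infty_{\rm loc}(0,T;L^n(\Omega))$, and since $p\le n$ (with $p<n$ unless $m=n+1$) the factor $\rho^{n-p}$ is controlled neither in $L^\infty$ nor in any $L^r$ large enough to close a H\"older estimate against $\na\rho^p\in L^2$ alone; the quantity $\|\na\rho^n\|_{L^2}$ is simply not bounded by the entropy dissipation in the fully parabolic case. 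The point of the paper's entropy $E(\rho,c)=\int_\Omega(\rho^n/(n-1)+\alpha c^2/(2\delta))\,dx$ is that the associated multipliers $(\tfrac{n}{n-1}\rho^{n-1},\,c/\delta)$ are exactly the entropy variables $(r,b)$ for which the diffusion matrix in \eqref{1.rho.c} has antisymmetric off-diagonal entries $\mp\delta\rho$: the chemotactic term contributes $-\int\na\rho^n\cdot\na c$, the cross-diffusion term contributes $+\int\na\rho^n\cdot\na c$, and they cancel identically. With your normalization this exact cancellation --- the structural heart of the result --- is destroyed, and the estimate does not close. The only term that genuinely survives on the right-hand side is $\frac1\delta\int_\Omega\rho c\,dx$, which is what Lemma \ref{lem.ineq} handles under the hypothesis $p>1-2/d$.

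Two further gaps. First, adding $\eps((-\Delta)^k u+u)$ to \emph{both} equations in the original variables is not obviously compatible with the entropy inequality: the paper regularizes only the $\rho$-equation and does so in the entropy variable $r=\tfrac{n}{n-1}\rho^{n-1}$, adding the weak form of $\eps(\Delta^2 r-\diver(|\na r|^2\na r)+r)$, precisely so that testing with $r$ yields the nonnegative contribution $\eps\int_\Omega((\Delta r)^2+|\na r|^4+r^2)\,dx$, the discrete entropy inequality holds uniformly in $\eps$, and testing with $(1,0)$ still gives an $L^1$ bound on $\rho$. Second, your appeal to Aubin--Lions is too quick: the entropy estimate bounds $\na\rho^p$ in $L^2$, not $\na\rho$, so for $p>1$ there is no Sobolev bound on $\rho$ itself and the classical Aubin lemma does not apply. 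The paper resolves this with a Dubinskii-type compactness result (Lemma \ref{lem.comp}, via the Chavent--Jaffr\'e lemma) for $p\ge1$, and with a separate $W^{1,\ell}$ bound on $\rho$ (using $Q>1$) for $p<1$; some such device is indispensable in your argument as well.
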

\begin{remark}\rm\label{rem.weak}
A weak solution is to be understood in the standard sense by testing
the system of equations against compactly supported smooth functions in
$C^\infty((0,T)\times \Omega))$. Due to the regularity properties
of the solution, however, test functions in
$L^s(0,T;W^{1,s}(\Omega))$ are sufficient for the weak formulation
of both equations in the fully parabolic system to be well
defined. For the parabolic-elliptic system, we show in Section
\ref{sec.unif} that we can even allow for test functions in
$L^2(0,T;H^1(\Omega))$.
\qed
\end{remark}

Let us discuss the conditions on $p$ which are equivalent to
\begin{equation}\label{conditions}
  m-1\le n\le m+1,  \quad m+n+\frac{2}{d}n > 3.
\end{equation}
The areas of admissible values for $(m,n)$ are illustrated in Figure
\ref{fig.d}. Notice that the bands between $n-1\leq m\leq
n+1$ continue to the right.

\begin{figure}[ht]
\centering
\includegraphics[width=52mm,height=40mm]{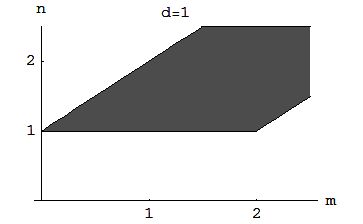}
\includegraphics[width=52mm,height=40mm]{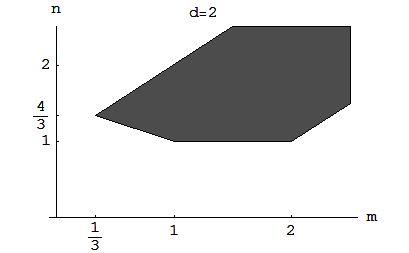}
\includegraphics[width=52mm,height=40mm]{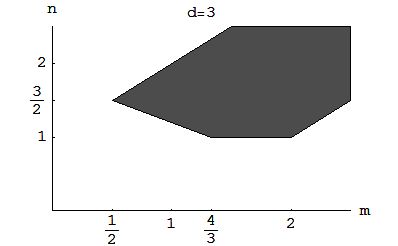}
\caption{Admissible values $(m,n)$ for $d=1$ (left), $d=2$ (middle),
and $d=3$ (right).}
\label{fig.d}
\end{figure}

In the fast-diffusion case, for $d=2$, we may take $\frac13<m<1$
and $\frac12(3-m)<n\le m+1$; for $d=3$, the values $\frac12<m<1$
and $\frac35(3-m)<n\le m+1$ are admissible. For classical
diffusion, $m=1$, the above conditions are satisfied for any
$1<n\le 2$ (if $d=2$) and $6/5<n\le 2$ (if $d=3$). Hence, the
degenerate cross-diffusion term prevents blow up in finite time
even in the case of linear cell diffusion in three dimensions. In
short, one of the conditions in \eqref{conditions} is needed to
derive a nice bound on an entropy functional and the others for
suitable compactness and continuity properties of the approximated sequences.

The key idea of the proof of Theorem \ref{thm.ex} is the observation that system
\eqref{1.rho}-\eqref{1.c} possesses an entropy functional,
\begin{equation}\label{1.ent}
  E(\rho,c) = \int_\Omega\Big(\frac{\rho^n}{n-1} +
  \alpha \frac{c^2}{2\delta}\Big)dx,
\end{equation}
useful to derive a priori estimates. Indeed, differentiating
formally this functional, we obtain the entropy production
equation
$$
  \frac{dE}{dt} + \int_\Omega\Big(\frac{mn}{p^2}|\na \rho^p|^2
  + \frac{1}{\delta}|\na c|^2 + \frac{c^2}{\delta}\Big)dx
  = \frac{1}{\delta}\int_\Omega\rho cdx,
$$
recalling that $p = (m+n-1)/2$. We will show in the proof of Lemma \ref{lem.ent}
that the right-hand side can be estimated for any $\beta>0$ as follows:
\begin{equation}\label{1.aux}
  \int_\Omega\rho cdx \le \beta\int_\Omega|\na\rho^p|^2 dx
  + C(\beta,\|\rho\|_{L^1(\Omega)}) + \frac{1}{2}\int_\Omega(|\na c|^2 + c^2)dx,
\end{equation}
under the restriction $1-2/d<p$, which follows from the conditions
in \eqref{conditions} for $1\leq d\leq 3$. The assumptions $m>0$
and $n>1$ imply that $p>0$, thus only for $d=3$ we obtain the
restriction $p>1/3$. Let us remark that in the case $d=3$, the
restriction $1-2/d<p$ is redundant, that is, conditions
\eqref{conditions} together with $n>1$ and $m>0$ imply that
$p+2/d\geq p+n/d>1$ for $n\in(1,2]$, as well as $p>1$ since
$2p=m+n-1> 2$ for $n>2$ with $m\geq n-1\geq 1$.

The existence of the entropy functional \eqref{1.ent} implies the existence
of so-called entropy variables which makes the new diffusion matrix positive
(semi-) definite. Indeed, introducing the entropy variables
$$
  r = \frac{\pa E}{\pa\rho} = \frac{n}{n-1}\rho^{n-1}, \quad
  b = \frac{\pa E}{\pa c} = \frac{c}{\delta},
$$
system \eqref{1.rho}-\eqref{1.c} can be written as
\begin{equation}\label{1.rho.c}
  \frac{\pa}{\pa t}\begin{pmatrix} \rho \\ \alpha c \end{pmatrix}
  - \diver\left(\begin{pmatrix}
  (m/n)\rho^{m-n+1} & -\delta\rho \\ \delta\rho & \delta
  \end{pmatrix}\na\begin{pmatrix} r \\ b \end{pmatrix}\right)
  = \begin{pmatrix} 0 \\ \rho-c \end{pmatrix}.
\end{equation}
In hyperbolic or parabolic systems, the existence of an entropy
functional is equivalent to the existence of a change of unknowns which ``symmetrizes''
the system \cite{DGJ97,KaSh88}. (For parabolic systems, ``symmetrization'' means that
the transformed diffusion matrix is symmetric and positive definite.)
In system \eqref{1.rho.c}, the diffusion matrix is
nonsymmetric, but still positive semi-definite.

The existence proof is based on the construction of a problem
which approximates \eqref{1.rho.c}. First, we replace the time
derivative by an implicit Euler approximation with time step
$\tau>0$ and add a weak form of the fourth-order operator
$\eps(\Delta^2 r-\diver(|\nabla r|^2 \nabla r)+r)$ ($\eps>0$)
to the first component of \eqref{1.rho.c}, which guarantees the
coercivity of the elliptic system in $H^2(\Omega)$ with respect to
$r$. The existence of weak approximating solutions
$(r_\eps,b_\eps)$ is shown by using the Leray-Schauder fixed-point
theorem. At this point, we need the restriction $p\leq m$,
which is equivalent to $n\leq m+1$, in \eqref{conditions}
to ensure the continuity and coercivity.
The discrete entropy estimate implies a priori estimates
uniform in the approximation parameters $\tau$ and $\eps$, which
allow us to pass to the limit $(\tau,\eps)\to 0$.

There are two technical difficulties in the limiting procedure.
The first one is that the entropy estimate yields a uniform bound
for $\rho_\eps^p$ in $H^1(\Omega)$, but an estimate for
$\pa_t\rho_\eps$ in $(H^{3}(\Omega))'$. If $p\le 1$, this implies
a bound for $\rho_\eps$ in $W^{1,r}(\Omega)$ for some $r>0$, and
we can apply the Aubin lemma to conclude the relative compactness
of the family $(\rho_\eps)_{\eps>0}$. If $p>1$, we infer this
property using a variant of the Dubinskii lemma
(see Lemma \ref{lem.comp}). The
second difficulty is to ensure the strong convergence of the
family $(\rho_\eps)_{\eps>0}$ in $L^2(\Omega\times(0,T))$. In two
space dimensions $d=2$ (and with $n=m=1$), this has been proved in
\cite{HiJu11}. However, for $d=3$ (and $p<1$), our uniform
estimates in Lemma \ref{lem.est.s} need additional assumptions on
the diffusion parameters, namely $p>1-n/d$ and $p\leq n$, or
equivalently, the remaining two conditions in \eqref{conditions}:
$m+n+2n/d>3$ and $m-1\leq n$.

Our second main result concerns some qualitative properties of the solutions
to \eqref{1.rho}-\eqref{1.ic} using the entropy functional.
First, we prove $L^\infty$ bounds for the solutions to the
parabolic-elliptic system generalizing the results of \cite{Kow05,KoSz08}
to this situation.

\begin{theorem}[Boundedness in $L^\infty$]\label{thm.bound}
Let the assumptions of Theorem \ref{thm.ex} hold and let $\alpha=0$.
Then, for any $T>0$, the solution $(\rho,c)$ to the parabolic-elliptic system
\eqref{1.rho}-\eqref{1.ic} satisfies
$$
  \|\rho\|_{L^\infty(0,T;L^\infty(\Omega))}
  + \|c\|_{L^\infty(0,T;L^\infty(\Omega))} \le C(T),
$$
where the constant $C(T)>0$ depends on $T>0$.
\end{theorem}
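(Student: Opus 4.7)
The plan is to reformulate the parabolic-elliptic system using a nonnegative auxiliary variable. Setting $v = c + \delta\rho^n$, the elliptic equation \eqref{1.c} with $\alpha=0$ becomes $-\Delta v + v = \rho + \delta\rho^n \ge 0$, a standard elliptic problem with homogeneous Neumann data inherited from \eqref{1.bc}, so that the maximum principle (together with Hopf's lemma to rule out boundary minima) yields $v\ge 0$ in $\Omega$. Since $\na c = \na v - \delta\na\rho^n$ and $\rho\,\na\rho^n = \tfrac{n}{n+1}\na\rho^{n+1}$, the cell equation \eqref{1.rho} rewrites equivalently as
\[
  \pa_t\rho = \Delta\bigl(\rho^m + \tfrac{\delta n}{n+1}\rho^{n+1}\bigr) - \diver(\rho\,\na v),
\]
so the troublesome cross diffusion is traded for extra degenerate self-diffusion of order $\rho^{n+1}$, while the drift $v$ is now controlled by an elliptic equation whose right-hand side depends only on $\rho$.

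With this reformulation I would run an Alikakos-type Moser iteration. Testing the new cell equation against $\rho^{k-1}$ for integer $k\ge 2$, integrating by parts (the boundary flux vanishes because the original flux does), and using $\Delta v = v - \rho - \delta\rho^n$ on the drift contribution leads to a differential inequality of the form
\[
  \frac{1}{k}\frac{d}{dt}\|\rho\|_{L^k}^k + \mu_k\,\|\na\rho^{(k+n)/2}\|_{L^2}^2 \le C_k\bigl(\|\rho\|_{L^{k+1}}^{k+1} + \|\rho\|_{L^{k+n}}^{k+n}\bigr),
\]
after dropping the nonnegative terms $(k-1)m\int\rho^{k+m-3}|\na\rho|^2$ and $\tfrac{k-1}{k}\int\rho^k v$, with an explicit $\mu_k>0$. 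The super-critical powers on the right are then absorbed by Gagliardo--Nirenberg interpolation against $\|\na\rho^{(k+n)/2}\|_{L^2}^2$, using the mass conservation $\|\rho(t)\|_{L^1}=\|\rho_0\|_{L^1}$ (implied by \eqref{1.bc}) as the low-norm anchor.

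Choosing $k = 2^j$ yields a recursion $\|\rho(t)\|_{L^{2^j}}^{2^j}\le C_j(T)$ whose constants grow at most polynomially in $2^j$, and passing to the limit $j\to\infty$ via $\|\cdot\|_{L^\infty} = \lim_{q\to\infty}\|\cdot\|_{L^q}$ produces the $L^\infty$ bound on $\rho$ asserted in Theorem~\ref{thm.bound}. Feeding this bound back into $-\Delta v + v = \rho + \delta\rho^n \in L^\infty$ and invoking standard $W^{2,q}$ elliptic regularity gives $v\in L^\infty$, and therefore $c = v - \delta\rho^n \in L^\infty$ on $(0,T)\times\Omega$.

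The principal obstacle is precisely the cross-diffusion term $\delta\Delta\rho^n$ in \eqref{1.c}: it blocks any direct elliptic $L^\infty$ bound on $c$, and when plugged into the usual drift estimate $\int\rho^{k-1}\na\rho\cdot\na c$ it produces contributions not obviously dominated by the porous-medium dissipation. The substitution $v = c+\delta\rho^n$ dissolves both issues at once and, pleasingly, converts the cross diffusion into exactly the additional dissipation $\int \rho^{k+n-2}|\na\rho|^2$ that powers the Alikakos loop; the restriction $n>1$ ensures that this term is genuinely super-diffusive and capable of absorbing the $\rho^{k+n}$ source. A secondary, more bureaucratic difficulty is that the formal test $\rho^{k-1}$ must be justified on the approximating solutions constructed in the proof of Theorem~\ref{thm.ex}, typically by truncating to $(\min\{\rho,M\})^{k-1}$ and then letting $M\to\infty$, with the derived estimates being uniform in the regularization parameters so that they survive the limit $(\tau,\eps)\to 0$.
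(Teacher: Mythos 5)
Your strategy is sound and shares its backbone with the paper's proof: both introduce $v=c+\delta\rho^n$, rewrite the cell equation as $\pa_t\rho=\Delta(\rho^m+\tfrac{\delta n}{n+1}\rho^{n+1})-\diver(\rho\na v)$ with $-\Delta v+v=\rho+\delta\rho^n$, establish $v\ge 0$ (the paper tests with $v^-=\min\{0,v\}$ rather than invoking Hopf's lemma, which is the right move for weak solutions), and then run an Alikakos iteration anchored on a low norm of $\rho$. Where you genuinely diverge is in the treatment of the drift term inside the iteration. The paper first proves $\rho\in L^\infty(0,T;L^{3n+1}(\Omega))$ by a single test with $(3n+1)\rho^{3n}$, feeds this into elliptic regularity to get $\na v\in L^\infty(\Omega_T)$, and only then iterates with truncated test functions $(\rho_K-k)_+^q$, estimating the drift crudely by $\|\na v\|_{L^\infty}$ as in Kowalczyk's argument. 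You instead substitute $\Delta v=v-\rho-\delta\rho^n$ at \emph{every} level $k$, which converts the drift into $\tfrac{k-1}{k}\int(\rho^{k+1}+\delta\rho^{k+n})$ and lets the cross-diffusion-generated dissipation $\sim\delta nk^{-1}\|\na\rho^{(k+n)/2}\|_{L^2}^2$ absorb it via Gagliardo--Nirenberg with polynomially growing constants; this is more self-contained (no $W^{1,\infty}$ bound on $v$ needed) at the cost of re-verifying the absorption at each level. Two caveats. First, what you dismiss as bureaucratic is where the paper does most of its work: the formal test functions $\rho^{k-1}$ must be run against the $\eps$-regularized scheme containing $\eps(\Delta^2 r-\diver(|\na r|^2\na r)+r)$, and the term $\eps\int\Delta r\,\Delta\rho^{3n}$ is sign-indefinite; the paper controls it only through a completion-of-squares argument that crucially exploits the extra $p$-Laplacian-type regularization and the uniform bound $\eps^{1/4}\|\na r^{(\eps,\tau)}\|_{L^4(\Omega_T)}\le C$ (see Remark \ref{remark-regularisation}). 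Truncating $\rho$ alone does not dispose of this. Second, your recursion starts from $\|\rho_0\|_{L^{2^j}}$, so you are implicitly assuming $\rho_0\in L^\infty(\Omega)$, exactly as the paper's Theorem \ref{par-ell-thm} does (the bare hypotheses of Theorem \ref{thm.ex} only give $\rho_0\in L^n(\Omega)$); this should be stated.
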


Second, we are able to show the asymptotic stability of solutions
to the constant steady state. Due to the special structure of the
entropy functional, we can allow for a very particular choice of the
parameters $m$, $n$, and $\delta$ only.

\begin{proposition}[Long-time decay for $m=1$, $n=2$]\label{prop.decay}
Let $\Omega\subset \mathbb{R}^d$ $(1\leq d\leq3)$ be a bounded
domain with $\partial \Omega\in C^{1,1}$. Let $\r_0 \in
L^\infty(\Omega)$, $m=1$, $n=2$, and $\delta>C_P^2/4$, where
$C_P$ is the constant of the Poincare inequality in $L^2(\Omega)$.
Then the solution to the parabolic-elliptic system
\eqref{decay.rho}-\eqref{decay.c} with $\alpha=0$, constructed in
Theorem \ref{par-ell-thm}, decays exponentially fast to the
homogeneous steady state in the sense that
\[
  \|\r(\cdot,t)-\r^*\|_{L^2(\Omega)}\leq Ce^{-\kappa t}, \quad
  \|c(\cdot,t)-c^*\|_{L^1(\Omega)}\leq Ce^{-\kappa t},
\]
where $C>0$ is some constant and $\kappa=\min\{1,4\delta-C_P^2\}/(4\delta)$.
Moreover, any smooth solution $(\r,c)$ to the fully parabolic
system \eqref{decay.rho}-\eqref{decay.c} with $\alpha=1$ has the
decay properties
\[
  \|\r(\cdot,t)-\r^*\|_{L^2(\Omega)}\leq Ce^{-\kappa t}, \quad
  \|c(\cdot,t)-c^*\|_{L^2(\Omega)}\leq Ce^{-\kappa t}
\]
for all $t>0$.
\end{proposition}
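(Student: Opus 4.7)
The plan is to specialize the entropy identity of Lemma~\ref{lem.ent} to $m=1$, $n=2$ (so $p=1$ and $mn/p^2=2$),
\[
\frac{dE}{dt}+2\int_\Omega|\na\r|^2\,dx+\frac{1}{\delta}\int_\Omega|\na c|^2\,dx+\frac{1}{\delta}\int_\Omega c^2\,dx=\frac{1}{\delta}\int_\Omega\r c\,dx, \quad E=\int_\Omega\Big(\r^2+\frac{\alpha c^2}{2\delta}\Big)\,dx,
\]
and to rewrite it in terms of the deviation from equilibrium. Mass conservation pins down $\r^*=|\Omega|^{-1}\int_\Omega\r_0\,dx$, and integrating the $c$-equation in space gives $c^*=\r^*$ (immediately when $\alpha=0$, and as the attractor of the scalar ODE $\frac{d}{dt}\int c=\int(\r-c)$ when $\alpha=1$). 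Setting $\tilde\r=\r-\r^*$ and $\tilde c=c-c^*$, we have $\int\tilde\r\,dx=0$ always, $\int\tilde c\,dx=0$ in the parabolic-elliptic case, and $\frac{d}{dt}\int\tilde c\,dx=-\int\tilde c\,dx$ in the fully parabolic case.

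Plugging $\r=\tilde\r+\r^*$ and $c=\tilde c+c^*$ into the identity above and using $\r^*=c^*$ together with those three identities, all constant and linear-in-$c^*$ contributions cancel, leaving the clean relative-entropy identity
\[
\frac{dE_*}{dt}+2\int_\Omega|\na\tilde\r|^2\,dx+\frac{1}{\delta}\int_\Omega|\na\tilde c|^2\,dx+\frac{1}{\delta}\int_\Omega\tilde c^2\,dx=\frac{1}{\delta}\int_\Omega\tilde\r\tilde c\,dx,
\]
with $E_*=\int_\Omega\bigl(\tilde\r^2+(\alpha/(2\delta))\tilde c^2\bigr)\,dx$. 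Young's inequality bounds the right-hand side by $(2\delta)^{-1}(\int\tilde\r^2+\int\tilde c^2)$, and Poincar\'e (available because $\int\tilde\r=0$) gives $\int\tilde\r^2\le C_P^2\int|\na\tilde\r|^2$; absorbing both terms into the left-hand side yields
\[
\frac{dE_*}{dt}+\frac{4\delta-C_P^2}{2\delta}\int_\Omega|\na\tilde\r|^2\,dx+\frac{1}{\delta}\int_\Omega|\na\tilde c|^2\,dx+\frac{1}{2\delta}\int_\Omega\tilde c^2\,dx\le 0,
\]
which is coercive precisely when $\delta>C_P^2/4$. A further application of Poincar\'e on the $|\na\tilde\r|^2$-term produces a lower bound of the form $2\kappa E_*$ with $\kappa$ as in the statement; Gronwall then gives $E_*(t)\le E_*(0)e^{-2\kappa t}$, and taking square roots supplies the announced $L^2$-decay of $\tilde\r$ and, when $\alpha=1$, of $\tilde c$. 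The sign-indefinite cross term $\delta^{-1}\int\tilde\r\tilde c$ is the main source of difficulty, and it is precisely its absorption via Young and Poincar\'e that forces the threshold on $\delta$.

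The delicate remaining point is the $L^1$-decay of $\tilde c$ when $\alpha=0$, since $E_*$ then does not control $\tilde c$. I would obtain it by duality combined with Theorem~\ref{thm.bound}. The elliptic equation reads $(I-\Delta)\tilde c=\tilde\r+\delta\Delta f$ with $f=(\r+\r^*)\tilde\r=\r^2-(\r^*)^2$. Given $\varphi\in L^\infty(\Omega)$ with $\|\varphi\|_\infty\le 1$, solve $(I-\Delta)\psi=\varphi$ under homogeneous Neumann data; the maximum principle (interior comparison plus Hopf on $\pa\Omega$) gives $\|\psi\|_\infty\le 1$. Integrating by parts twice (all boundary terms vanish by the Neumann conditions on $\tilde c$, $\psi$, and on $f$, since $\pa_\nu f=2\r\pa_\nu\r=0$),
\[
\int_\Omega\tilde c\,\varphi\,dx=\int_\Omega\tilde\r\,\psi\,dx+\delta\int_\Omega f(\psi-\varphi)\,dx\le\|\tilde\r\|_{L^1}+2\delta\|f\|_{L^1}.
\]
Theorem~\ref{thm.bound} supplies $\|\r\|_\infty\le C(T)$, hence $\|f\|_{L^1}\le\|\r+\r^*\|_\infty\|\tilde\r\|_{L^1}\le C\|\tilde\r\|_{L^1}$. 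Taking the supremum over $\varphi$ and using $\|\tilde\r\|_{L^1}\le|\Omega|^{1/2}\|\tilde\r\|_{L^2}$ together with the $L^2$-decay already proven gives the claimed $L^1$-decay of $\tilde c$. A minor caveat is that the weak solution constructed in the parabolic-elliptic case may not be regular enough to differentiate $\int\tilde\r^2$ pointwise in time; this is handled as usual by running the computation at the level of the approximating smooth problem from the existence proof and passing to the limit.
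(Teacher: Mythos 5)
Your Step 1 coincides with the paper's: the same relative entropy $E^*$, the same Young--Poincar\'e absorption of the cross term $\delta^{-1}\int\tilde\rho\,\tilde c$, the same threshold $\delta>C_P^2/4$ and rate $\kappa$. The bookkeeping you perform to pass from the absolute entropy identity to the relative one (using $\rho^*=c^*$, $\int_\Omega\tilde\rho\,dx=0$, and $\int_\Omega\tilde c\,dx=0$ resp.\ $\frac{d}{dt}\int_\Omega\tilde c\,dx=-\int_\Omega\tilde c\,dx$) is correct and produces exactly what the paper gets by testing with $(2(\rho-\rho^*),(c-c^*)/\delta)$. For the $L^1$ decay of $c-c^*$ when $\alpha=0$ you genuinely depart from the paper: you use $L^1$--$L^\infty$ duality on $(I-\Delta)\tilde c=\tilde\rho+\delta\Delta f$, whereas the paper tests the elliptic equation for $v-v^*=c-c^*+\delta(\rho^2-(\rho^*)^2)$ with $v-v^*$ itself and must then control the quadratic term $\delta\int(\rho-\rho^*)^2(v-v^*)\,dx$ through an $L^\infty(0,\infty)$ bound on $v-v^*$, which costs them a separate, fairly long Step 3 establishing $\rho-\rho^*\in L^\infty(0,\infty;L^4(\Omega))$ by testing with $4(\rho-\rho^*)^3$.

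As written, however, your $L^1$ step has a genuine gap. The proposition claims $\|c(\cdot,t)-c^*\|_{L^1(\Omega)}\le Ce^{-\kappa t}$ with $C$ independent of $t$, but you close the estimate via $\|f\|_{L^1}\le\|\rho+\rho^*\|_{L^\infty}\|\tilde\rho\|_{L^1}$ and invoke Theorem \ref{thm.bound}, whose constant is $C(T)$ and is \emph{not} uniform in time; the paper's Steps 2--3 exist precisely because no time-uniform $L^\infty$ (or even $L^4$) bound comes for free. Your argument therefore only yields $\|\tilde c(t)\|_{L^1}\le C(T)e^{-\kappa t}$ on each finite interval, which is not the stated global-in-time conclusion. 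The repair is easy and keeps your route intact: estimate instead $\|f\|_{L^1}=\|(\rho+\rho^*)\tilde\rho\|_{L^1}\le\|\rho+\rho^*\|_{L^2}\|\tilde\rho\|_{L^2}$ and note that Step 1 already gives $\sup_{t>0}\|\rho(t)\|_{L^2}<\infty$ together with $\|\tilde\rho(t)\|_{L^2}\le Ce^{-\kappa t}$; with that substitution your duality argument is complete and actually shorter than the paper's. Two smaller points: the integrations by parts should be organized around $v=c+\delta\rho^2$, whose normal flux is what actually vanishes in the weak formulation, rather than assuming $\pa_\nu f=0$ for the weak solution; and your closing suggestion to justify the time differentiation at the level of the approximating problem is explicitly unavailable here --- the paper notes that the $\eps$-regularization destroys conservation of mass, so the regularized system has no constant steady state; one must instead use the admissibility of $L^2(0,T;H^1(\Omega))$ test functions from Theorem \ref{par-ell-thm} when $\alpha=0$ and the smoothness assumption when $\alpha=1$.
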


The paper is organized as follows. In Section \ref{sec.aux}, we
prove an inequality which is needed for the proof of \eqref{1.aux}
and we show a compactness result which combines the lemmas of
Aubin and Dubinskii. Theorems \ref{thm.ex} and \ref{thm.bound} are
shown in Sections \ref{sec.ex} and \ref{sec.unif}, respectively,
whereas Proposition \ref{prop.decay} is proved in Section \ref{sec.lt}.
Finally, in Section \ref{sec.numer} we present some numerical results in
two and three space dimensions which illustrate the effect of the
exponent $n$.


\section{Auxiliary results}\label{sec.aux}

\begin{lemma}\label{lem.ineq}
Let $\Omega\subset\R^d$ $(d\ge 1)$ be a bounded domain with
$\pa\Omega\in C^{0,1}$.
Furthermore, let $\beta>0$ and
\begin{itemize}
\item
  \begin{tabbing}
  if $d\geq 3$: \= either $1-2/d<p\le 1$ and
  $q=2d/(d+2)$ \\
  \> or $p>1$ and $q=p+1/2$,
  \end{tabbing}
\item
  \begin{tabbing}
  if $d\leq 2$: \= either $0<p\le 1$, $q>1$, and $p+1/q>3/2-1/d$ \\
  \> or $p>1$ and $q=p+1/2$.
  \end{tabbing}
\end{itemize}
Then there exists a constant $C(\beta,\|\r\|_{L^1(\Omega)})>0$ such that for all
$\rho\in L^1(\Omega)$ satisfying $\rho\ge 0$ in $\Omega$ and $\rho^p\in H^1(\Omega)$,
the following inequality holds:
$$
  \|\rho\|_{L^q(\Omega)}^2
  \le \beta\|\na\rho^p\|_{L^2(\Omega)}^2 + C(\beta,\|\rho\|_{L^1(\Omega)}).
$$
\end{lemma}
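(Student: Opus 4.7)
My plan is to reduce the claim to a Gagliardo--Nirenberg interpolation applied to $u := \rho^p$, followed by Young's inequality. Since $\rho\ge 0$ and $\rho^p\in H^1(\Omega)$, the function $u$ is a nonnegative $H^1$-function on the bounded Lipschitz domain $\Omega$, and the identity $\int_\Omega u^{1/p}\,dx = \|\rho\|_{L^1(\Omega)}$ couples the $L^{1/p}$-functional of $u$ with the $L^1$-mass of $\rho$ that is permitted in the constant.

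The interpolation I want is
\[
  \|u\|_{L^{q/p}(\Omega)} \le C\bigl(\|\nabla u\|_{L^2(\Omega)}^{\theta}\,\|u\|_{L^{1/p}(\Omega)}^{1-\theta} + \|u\|_{L^{1/p}(\Omega)}\bigr),
\]
where $\theta\in[0,1]$ is determined by the scaling relation $p/q = \theta(1/2 - 1/d) + (1-\theta)p$. Raising to the power $2/p$ and substituting $\|u\|_{L^{1/p}}^{1/p} = \|\rho\|_{L^1}$ yields
\[
  \|\rho\|_{L^q(\Omega)}^2 \le C\,\|\nabla\rho^p\|_{L^2(\Omega)}^{2\theta/p}\,\|\rho\|_{L^1(\Omega)}^{2(1-\theta)} + C\,\|\rho\|_{L^1(\Omega)}^{2},
\]
and a Young's inequality absorbs the first term into $\beta\|\nabla\rho^p\|_{L^2}^2 + C(\beta,\|\rho\|_{L^1})$ as soon as $2\theta/p<2$, i.e.\ $\theta<p$. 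A direct algebraic manipulation shows that $\theta<p$ is exactly equivalent to the assumption $p + 1/q > 3/2 - 1/d$: in the case $d\ge 3$ with $q = 2d/(d+2)$ it collapses to $p>1-2/d$; for $d\le 2$ it is imposed directly; and for $p>1$ with $q = p+1/2$ one checks that $p + 2/(2p+1) > 3/2 - 1/d$ holds automatically.

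Whenever $0<p\le 1$ one has $s := 1/p \ge 1$ and the above Gagliardo--Nirenberg inequality is the standard one on a bounded Lipschitz domain, so nothing further is needed. For $p>1$ with $q = p+1/2$, however, the ``exponent'' $1/p$ lies below one and $\|u\|_{L^{1/p}}$ is only a quasi-norm. In that regime I would proceed by Hölder: writing $\rho^{p+1/2} = \rho^{1/2}\cdot\rho^{p}$ gives
\[
  \|\rho\|_{L^q(\Omega)}^2 \le \|\rho\|_{L^1(\Omega)}^{2/(2p+1)}\,\|u\|_{L^2(\Omega)}^{4/(2p+1)},
\]
and since $4/(2p+1)<2$ for $p>1$, a standard Gagliardo--Nirenberg estimate bounding $\|u\|_{L^2}$ by $\|\nabla u\|_{L^2}$ and a lower $L^{s}$-functional of $u$ brings the gradient exponent down below $2$, whereupon Young closes the argument; the residual $\|u\|_{L^1} = \int\rho^p\,dx$ is in turn interpolated between $\|\rho\|_{L^1}$ and a Sobolev-admissible higher $L^r$-norm of $\rho$ controlled via $H^1(\Omega)\hookrightarrow L^{2d/(d-2)}(\Omega)$ applied to $u$.

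The main obstacle is precisely this accommodation of the quasi-norm regime $s<1$ in the case $p>1$; once past it, the proof is organised around the single algebraic fact that the sharp Young threshold $\theta<p$ collapses to $p + 1/q > 3/2 - 1/d$, which is why this single inequality governs all the case distinctions in the statement.
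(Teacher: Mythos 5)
Your argument is essentially the paper's: for $0<p\le 1$ you apply the same Gagliardo--Nirenberg inequality to $u=\rho^p$ with lower exponent $1/p$ and close by Young under the same threshold $\theta<p$, which you correctly identify with $p+1/q>3/2-1/d$; for $p>1$ you use the same H\"older splitting $\rho^{q}=\rho^{1/2}\rho^{p}$ together with a Poincar\'e/Gagliardo--Nirenberg bound on $\|\rho^p\|_{L^2}$. The only divergence is the final absorption of the residual term involving $\|\rho^p\|_{L^1}=\|\rho\|_{L^p}^p$: the paper interpolates $\|\rho\|_{L^p}$ between $L^1(\Omega)$ and $L^q(\Omega)$ and absorbs $\tfrac12\|\rho\|_{L^q}^2$ into the left-hand side, using $2p\theta/q=2(p-1)/(q-1)<2$, whereas your closure via $H^1(\Omega)\hookrightarrow L^{2d/(d-2)}(\Omega)$ reintroduces $\|u\|_{L^1}$ (through $\|u\|_{H^1}$) on the right and needs one extra self-absorbing Young step to avoid circularity, and for $d\le 2$ the exponent $2d/(d-2)$ must be replaced by an arbitrary finite one. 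This is workable, but the paper's interpolation against the left-hand side is the cleaner route.
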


Notice that the continuous embedding $H^1(\Omega)\hookrightarrow L^s(\Omega)$,
where $1\le s\le 2d/(d-2)$ if $d\ge 3$ and $1\le s<\infty$ if $d\le 2$, shows
that $\rho^p\in H^1(\Omega)$ implies that $\rho\in L^{sp}(\Omega)$, and the
condition $q\le sp$ has to be imposed. This condition is satisfied for the
above choices of $p$ and $q$.

\begin{proof}
First, let $0<p\le 1$ and $(p,q)$ be given as in the Lemma.
The Gagliardo-Nirenberg inequality, see e.g. \cite[Theorem 10.1]{Fri69} and \cite[Theorem 1.1.4]{Zhe95},
 gives
\begin{align*}
  \|\rho\|_{L^q(\Omega)}^2
  &= \|\rho^p\|_{L^{q/p}(\Omega)}^{2/p}
  \le C\|\na\rho^p\|_{L^2(\Omega)}^{2\theta/p}
  \|\rho^p\|_{L^{1/p}(\Omega)}^{2(1-\theta)/p} + C\|\rho^p\|_{L^{1/p}(\Omega)}^{2/p} \\
  &= C\|\na\rho^p\|_{L^2(\Omega)}^{2\theta/p} \|\rho\|_{L^1(\Omega)}^{2(1-\theta)}
  + C\|\rho\|_{L^1(\Omega)}^2,
\end{align*}
where $\theta=dp(1-1/q)/(1-d/2+dp)$ and $C>0$ is here and in the following a
generic constant. The conditions $p>1-2/d$ if $d\ge 3$ and
$q>1$ if $d\le 2$ imply that $\theta>0$. For all space dimensions, it holds that
$p+1/q>3/2-1/d$ which is equivalent to $\theta<p\le 1$. Then the inequality
$\theta/p<1$ allows us to apply the Young inequality:
$$
  \|\rho\|_{L^q(\Omega)}^2 \le \beta\|\na\rho^p\|_{L^2(\Omega)}^2
  + C(\beta) \|\rho\|_{L^1(\Omega)}^{2p(1-\theta)/(p-\theta)}
  + C\|\rho\|_{L^1(\Omega)}^2,
$$
proving the first case.

Next, let $p>1$ and $q=p+1/2$. Notice that the Poincar\'e inequality implies that
$$
\|f\|_{L^2(\Omega)}\le \left\|f-\int_{\Omega} f dx\right\|_{L^2(\Omega)}+ \|f\|_{L^1(\Omega)}
  \le C_P \|\na f\|_{L^2(\Omega)}+ \|f\|_{L^1(\Omega)}.
$$
This together with the H\"older inequality leads to
\begin{align*}
  \|\rho\|_{L^q(\Omega)}^2
  &= \|\rho^q\|_{L^{1}(\Omega)}^{2/q}
  = \|\rho^p\rho^{1/2}\|_{L^{1}(\Omega)}^{2/q} \\
  &\le \|\rho^p\|_{L^2(\Omega)}^{2/q}\|\rho^{1/2}\|_{L^2(\Omega)}^{2/q}
  \le C\big(\|\na\rho^p\|_{L^2(\Omega)}^{2/q} + \|\rho^p\|_{L^1(\Omega)}^{2/q}\big)
  \|\rho^{1/2}\|_{L^2(\Omega)}^{2/q} \\
  &= C\|\na\rho^p\|_{L^2(\Omega)}^{2/q}\|\rho\|_{L^1(\Omega)}^{1/q}
  + C\|\rho\|_{L^p(\Omega)}^{2p/q}\|\rho\|_{L^1(\Omega)}^{1/q}.
\end{align*}
Furthermore, using the interpolation inequality with $1/p = \theta/q + (1-\theta)/1$
or, equivalently, $p\theta/q=(p-1)/(q-1)>0$,
$$
  \|\rho\|_{L^q(\Omega)}^2
  \le C\|\na\rho^p\|_{L^2(\Omega)}^{2/q}\|\rho\|_{L^1(\Omega)}^{1/q}
  + C\|\rho\|_{L^q(\Omega)}^{2p\theta/q}\|\rho\|_{L^1(\Omega)}^{2p(1-\theta)/q}
  \|\rho\|_{L^1(\Omega)}^{1/q}.
$$
Since $q>1$, we may employ the Young inequality for the first summand to obtain
$$
  \|\rho\|_{L^q(\Omega)}^2
  \le \beta\|\na\rho^p\|_{L^2(\Omega)}^2 + C(\beta)\|\rho\|_{L^1(\Omega)}^{1/(q-1)}
  + C\|\rho\|_{L^q(\Omega)}^{2p\theta/q}\|\rho\|_{L^1(\Omega)}^{2p(1-\theta)/q+1/q}.
$$
Since $1<p<q$, it follows that $2p\theta/q<2$, which
allows us to use the Young inequality for the second summand:
$$
  \|\rho\|_{L^q(\Omega)}^2
  \le \beta\|\na\rho^p\|_{L^2(\Omega)} + C(\beta)\|\rho\|_{L^1(\Omega)}^{1/(q-1)}
  + \frac12\|\rho\|_{L^q(\Omega)}^2 + C(\beta,\|\rho\|_{L^1(\Omega)}).
$$
The lemma is proved.
\end{proof}

Next, we recall a compactness result. Let $(\sigma_h\rho)(x,t)
=\rho(x,t-h)$, $t\ge h > 0$, be a shift operator.

\begin{lemma}[Dubinskii]\label{lem.dub}
Let $\Omega\subset\R^d$ $(d\ge 1)$ be a bounded domain with
$\pa\Omega\in C^{0,1}$ and let $T>0$. Furthermore, let
$p\ge 1$, $q\ge 1$, and $s\ge 0$,
and let $(\rho_\eps)$ be a sequence of nonnegative functions satisfying
$$
  h^{-1}\|\rho_\eps-\sigma_h\rho_\eps\|_{L^1(h,T;(H^{s}(\Omega))')}
  + \|\rho_\eps^p\|_{L^q(0,T;H^1(\Omega))} \le C \quad\mbox{for all }h>0,
$$
where $C>0$ is a constant which is independent of $\eps$ and $h$.
Then $(\rho_\eps)$ is relatively compact in $L^{p\ell}(0,T;L^{pr}(\Omega))$
for all $\ell<q$
and for all $r<2d/(d-2)$ if $d\ge 3$, $r<\infty$ if $d=2$, and $r\le \infty$ if $d=1$.
\end{lemma}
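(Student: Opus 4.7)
The plan is to verify the hypotheses of the Riesz-Fr\'echet-Kolmogorov compactness criterion for $(\rho_\eps)$ in $L^{p\ell}(0,T;L^{pr}(\Omega))$. The nonlinearity is handled throughout by the pointwise inequality
\[
|a-b|^{p}\le|a^{p}-b^{p}|,\qquad a,b\ge 0,\ p\ge 1,
\]
which lifts both spatial and temporal regularity from $\rho_\eps^{p}$ to $\rho_\eps$ at the cost of dividing each integrability exponent by $p$.

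\emph{Boundedness and spatial equi-continuity.} The Sobolev embedding $H^{1}(\Omega)\hookrightarrow L^{r^{*}}(\Omega)$, where $r^{*}$ is the Sobolev conjugate in the corresponding dimension, applied to the uniform $L^q(0,T;H^{1}(\Omega))$-bound on $\rho_\eps^{p}$, produces a uniform bound on $\rho_\eps$ in $L^{pq}(0,T;L^{pr^{*}}(\Omega))$. The compact embedding $H^{1}(\Omega)\hookrightarrow\hookrightarrow L^{r}(\Omega)$ for $r$ in the stated range yields spatial equi-continuity of $(\rho_\eps^{p})$ in $L^{q}(0,T;L^{r}(\Omega))$; combining this with the pointwise inequality and H\"older in time (permissible because $\ell\le q$) transfers it to spatial equi-continuity of $(\rho_\eps)$ in $L^{p\ell}(0,T;L^{pr}(\Omega))$.

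\emph{Temporal equi-continuity.} This is the main obstacle and the step in which the dual-space hypothesis comes in. The idea is to interpolate on the Banach-space scale between the uniform $L^{pr^{*}}$-bound and the small $(H^{s})'$-norm of time differences. Invoking the dual Sobolev embedding $L^{pr^{*}}(\Omega)\hookrightarrow (H^{s}(\Omega))'$, which is the natural compatibility requirement on $s$, the interpolation inequality gives
\[
\|\sigma_{h}\rho_\eps-\rho_\eps\|_{L^{pr}(\Omega)}
\le C\,\|\sigma_{h}\rho_\eps-\rho_\eps\|_{L^{pr^{*}}(\Omega)}^{1-\theta}\,
\|\sigma_{h}\rho_\eps-\rho_\eps\|_{(H^{s}(\Omega))'}^{\theta}
\]
for some $\theta\in(0,1)$. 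Raising to the power $p\ell$, integrating over $t\in(h,T)$, and applying H\"older in time with a suitable conjugate pair combines the uniform $L^{pq}(0,T;L^{pr^{*}}(\Omega))$-bound on $\rho_\eps$ with the hypothesis $\|\rho_\eps-\sigma_{h}\rho_\eps\|_{L^{1}(h,T;(H^{s})')}\le Ch$ to produce a bound of the form $C h^{\gamma}$ with $\gamma>0$. The strict inequality $\ell<q$ provides precisely the room needed to choose $\theta>0$ small enough that the $(H^{s})'$-factor appears under an $L^{\beta}$-norm in time with $\beta\le 1$, so that the $L^{1}$-hypothesis suffices to close the estimate.

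These three facts---uniform boundedness, spatial equi-continuity, and temporal equi-continuity---verify the hypotheses of the Riesz-Fr\'echet-Kolmogorov theorem and yield the claimed relative compactness of $(\rho_\eps)$ in $L^{p\ell}(0,T;L^{pr}(\Omega))$.
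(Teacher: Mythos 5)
Your overall skeleton (uniform bound, spatial equicontinuity, temporal equicontinuity, Riesz--Fr\'echet--Kolmogorov) is a reasonable substitute for the paper's route, and the pointwise inequality $|a-b|^p\le|a^p-b^p|$ for $a,b\ge0$, $p\ge1$ is correct and does transfer spatial translate estimates from $\rho_\eps^p$ to $\rho_\eps$. The fatal problem is the temporal step. The multiplicative interpolation inequality
\[
\|u\|_{L^{pr}(\Omega)}\le C\,\|u\|_{L^{pr^*}(\Omega)}^{1-\theta}\,\|u\|_{(H^{s}(\Omega))'}^{\theta},\qquad \theta>0,
\]
is false: take $u_N(x)=\sin(Nx_1)$ on a bounded domain. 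All its $L^m$-norms are bounded above and below uniformly in $N$, while $\|u_N\|_{(H^s(\Omega))'}\sim N^{-s}\to0$ for $s>0$; the right-hand side then tends to $0$ while the left-hand side does not. The same example kills the additive (Ehrling) version $\|u\|_{L^{pr}}\le\eta\|u\|_{L^{pr^*}}+C_\eta\|u\|_{(H^s)'}$, because the embedding $L^{pr^*}(\Omega)\hookrightarrow L^{pr}(\Omega)$ is continuous but \emph{not compact}. An interpolation of this type is only available when the ``high'' endpoint carries positive-order derivative control, i.e.\ is a space compactly embedded in $L^{pr}(\Omega)$; a pure Lebesgue bound, however high the exponent, cannot convert smallness in $(H^s)'$ of the time increments into smallness in $L^{pr}$. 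Since this is precisely the step where the hypothesis on $\rho_\eps-\sigma_h\rho_\eps$ must be exploited, the argument does not close.

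The missing idea, which is how the paper proceeds, is to first transfer the \emph{derivative} information (not just the integrability) from $\rho_\eps^p$ to $\rho_\eps$: since $x\mapsto x^{1/p}$ is H\"older continuous with exponent $1/p$, the Chavent--Jaffr\'e lemma gives $\|\rho_\eps\|_{W^{1/p,2p}(\Omega)}\le C\|\rho_\eps^p\|_{H^1(\Omega)}^{1/p}$, so $(\rho_\eps)$ is bounded in $L^{pq}(0,T;W^{1/p,2p}(\Omega))$. One then applies Simon's form of the Aubin lemma to the triple $W^{1/p,2p}(\Omega)\hookrightarrow\hookrightarrow L^{pr}(\Omega)\hookrightarrow(H^s(\Omega))'$, whose proof uses exactly the Ehrling inequality with the compactly embedded space $W^{1/p,2p}$ on the high end --- the ingredient your $L^{pr^*}$ bound cannot supply. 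If you want to keep your hands-on Riesz--Kolmogorov framework, you must run the temporal estimate with $\|\cdot\|_{W^{1/p,2p}(\Omega)}$ (or $\|\rho_\eps^p\|_{H^1(\Omega)}$) in place of $\|\cdot\|_{L^{pr^*}(\Omega)}$, at which point you have essentially reproved Simon's lemma.
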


A variant of this lemma is due to Dubinskii, see
\cite[Th\'eor\`eme 21.1, Chapter~1]{Lio69} for a proof.
A simple proof is achieved by applying
the lemmas of Aubin \cite{Sim87} and Chavent-Jaffre \cite{ChJa86}.
Since the result is of interest by itself, we provide the (short) proof.

\begin{proof}
The function $f(x)=x^{1/p}$, $0<x<\infty$, is H\"older continuous with exponent
$1/p$. Therefore, with $u=\rho_\eps^p$,
by the lemma of Chavent-Jaffre \cite[p.~141]{ChJa86},
$$
  \|\rho_\eps\|_{W^{1/p,2p}(\Omega)} = \|f(u)\|_{W^{1/p,2p}(\Omega)}
  \le K\|u\|_{H^1(\Omega)}^{1/p} = C\|\rho_\eps^p\|_{H^1(\Omega)}^{1/p}.
$$
This shows that $(\rho_\eps)$ is bounded in $L^{pq}(0,T;W^{1/p,2p}(\Omega))$.
By Aubin's lemma \cite[Theorem 6]{Sim87} and the compact embedding
$W^{1/p,2p}(\Omega)\hookrightarrow L^{pr}(\Omega)$ ($r$ is as in the lemma),
$(\rho_\eps)$ is relatively compact in $L^{p\ell}(0,T;L^{pr}(\Omega))$ for all
$\ell<q$.
\end{proof}

The following result, which will be used in this paper,
is a consequence of Lemma \ref{lem.dub}.

\begin{lemma}\label{lem.comp}
Let $\Omega\subset\R^d$ $(d\ge 1)$ be a bounded domain with
$\pa\Omega\in C^{0,1}$, let $T>0$, $\tau>0$, and let $t_k=k\tau$, $k=0,\ldots,N$,
with $N\tau = T$ be a decomposition of the interval $[0,T]$.
Furthermore, let $p\ge 1$, $q\ge 1$, and $s\ge 0$,
and let $(\rho_\tau)$ be a sequence of nonnegative functions, which are
piecewise constant in time on $(0,T)$, satisfying
$$
  \tau^{-1}\|\rho_\tau-\sigma_\tau\rho_\tau\|_{L^1(\tau,T;(H^s(\Omega))')}
  + \|\rho_\tau^p\|_{L^q(0,T;H^1(\Omega))} \le C \quad\mbox{for all }\tau>0,
$$
where $C>0$ is a constant which is independent of $\tau$.
Then $(\rho_\tau)$ is relatively compact in $L^{p\ell}(0,T;L^{pr}(\Omega))$
for all $\ell<q$ and
for all $r<2d/(d-2)$ if $d\ge 3$, $r<\infty$ if $d=2$, and $r\le \infty$ if $d=1$.
\end{lemma}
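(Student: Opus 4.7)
The plan is to deduce Lemma \ref{lem.comp} directly from the Dubinskii-type Lemma \ref{lem.dub}. Since the spatial bound $\|\rho_\tau^p\|_{L^q(0,T;H^1(\Omega))}\le C$ appears in both statements, the only thing to verify is that the discrete shift estimate at the single step size $\tau$ promotes to the continuous shift estimate
$$
 h^{-1}\|\rho_\tau-\sigma_h\rho_\tau\|_{L^1(h,T;(H^s(\Omega))')}\le C
 \qquad\text{for all }h>0,
$$
with a constant $C$ independent of $\tau$ and $h$. Once this is established, Lemma \ref{lem.dub} applied with $\eps=\tau$ immediately gives the claimed relative compactness in $L^{p\ell}(0,T;L^{pr}(\Omega))$ for the stated ranges of $\ell$ and $r$.

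\textbf{Key step: from discrete to continuous shifts.} Write $X=(H^s(\Omega))'$ and denote the constant value of $\rho_\tau$ on $(t_{k-1},t_k]$ by $\rho^k$. The hypothesis unfolds as $\sum_{k=2}^N\|\rho^k-\rho^{k-1}\|_X\le C$. For $0<h\le\tau$, the function $t\mapsto\rho_\tau(t)-\rho_\tau(t-h)$ vanishes on the intervals $(t_{k-1}+h,t_k]$ and equals $\rho^k-\rho^{k-1}$ on $(t_{k-1},t_{k-1}+h]$, so
$$
 \|\rho_\tau-\sigma_h\rho_\tau\|_{L^1(h,T;X)}
 =h\sum_{k=2}^N\|\rho^k-\rho^{k-1}\|_X
 =\frac{h}{\tau}\,\|\rho_\tau-\sigma_\tau\rho_\tau\|_{L^1(\tau,T;X)}.
$$
For $h>\tau$, I would write $h=m\tau+h'$ with $m\in\mathbb{N}$ and $0\le h'<\tau$ and telescope
$$
 \rho_\tau(t)-\rho_\tau(t-h)
 =\sum_{j=0}^{m-1}\bigl(\rho_\tau(t-j\tau)-\rho_\tau(t-(j+1)\tau)\bigr)
 +\bigl(\rho_\tau(t-m\tau)-\rho_\tau(t-m\tau-h')\bigr).
$$
A triangle inequality in $L^1(h,T;X)$, together with translations of the integration variable (extending $\rho_\tau$ by $\rho^1$ on $(-h,0)$ if needed so that the shifted functions are defined on $(h,T)$) and the previous case applied to the final term, yields
$$
 \|\rho_\tau-\sigma_h\rho_\tau\|_{L^1(h,T;X)}
 \le\Bigl(m+\frac{h'}{\tau}\Bigr)\|\rho_\tau-\sigma_\tau\rho_\tau\|_{L^1(\tau,T;X)}
 =\frac{h}{\tau}\,\|\rho_\tau-\sigma_\tau\rho_\tau\|_{L^1(\tau,T;X)}.
$$
Dividing by $h$ in either case produces exactly the constant $\tau^{-1}\|\rho_\tau-\sigma_\tau\rho_\tau\|_{L^1(\tau,T;X)}\le C$ provided by the hypothesis, uniformly in $\tau$ and $h$.

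\textbf{Expected obstacle.} The proof itself is essentially bookkeeping; the only place that requires care is the telescoping step when $h>\tau$, where one has to make sure each translated term $\sigma_{j\tau}\rho_\tau-\sigma_{(j+1)\tau}\rho_\tau$ is integrated over $(h,T)\subset(j\tau+\tau,T)$ rather than over its natural domain $(\tau,T)$, and that extending $\rho_\tau$ to negative times (or shrinking the integration interval) does not destroy the discrete bound. After that, invoking Lemma \ref{lem.dub} closes the argument with no further work.
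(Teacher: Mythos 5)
Your proposal is correct and follows essentially the same route as the paper: both reduce the lemma to verifying the uniform continuous-shift bound $\|\rho_\tau-\sigma_h\rho_\tau\|_{L^1(h,T;(H^s(\Omega))')}\le Ch$ and then invoke Lemma \ref{lem.dub}, splitting into the cases $h\le\tau$ and $h>\tau$. Your telescoping of $\sigma_h$ into $m$ full $\tau$-shifts plus a remainder $h'<\tau$ is a slightly cleaner piece of bookkeeping than the paper's explicit piecewise evaluation (it yields the sharp factor $h/\tau$ where the paper settles for $6h/\tau$), and your worry about shifted arguments leaving $[0,T]$ is in fact vacuous since $t-(j+1)\tau\ge h'\ge 0$ and $t-h\ge 0$ for all $t\in(h,T)$.
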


\begin{proof}
Since $\rho_\tau$ is piecewise constant in time, we can write
$\rho_\tau(\cdot,t)=\rho_k$ for $t\in((k-1)\tau,k\tau]$, $k=1,\ldots,N$,
for some functions $\rho_k$.

\emph{Case $h<\tau$.}
The difference $\rho_\tau-\sigma_h\rho_\tau$ partially cancels for $h<\tau$, and
we obtain, for $k=1,\ldots,N-1$ and $t>h$,
$$
  \|\rho_\tau(\cdot,t)-(\sigma_h\rho_\tau)(\cdot,t)\|_{(H^s(\Omega))'}
  = \left\{\begin{array}{ll}
  \|\rho_{k+1}-\rho_k\|_{(H^s(\Omega))'} &\mbox{if }t_k<t\leq t_k+ h, \\
  0 &\mbox{else}.
  \end{array}\right.
$$
Therefore, by assumption,
\begin{align*}
  h^{-1} &\|\rho_\tau-\sigma_h\rho_\tau\|_{L^1(h,T;(H^s(\Omega))')}
  = h^{-1}\sum_{k=1}^{N-1}\int_{t_k}^{t_{k}+h}
  \|\rho_{k+1}-\rho_k\|_{(H^s(\Omega))'} dt \\
  &= \sum_{k=1}^{N-1}\|\rho_{k+1}-\rho_k\|_{(H^s(\Omega))'}
  = \tau^{-1}\sum_{k=1}^{N-1}\int_{t_k}^{t_{k+1}}
  \|\rho_\tau(\cdot,t)-(\sigma_\tau\rho_\tau)(\cdot,t)\|_{(H^s(\Omega))'} dt
  \le C
\end{align*}
uniformly in $h<\tau$.

\emph{Case $h\geq \tau$.} There exists $m\in \mathbb{N}$ such that
$t_m<h\leq t_{m+1}$. Then, for $t\in(t_{k+m-1},t_{k+m}]$, $k=1,\dots N-m$,
$$
  \|(\r_\tau-\sigma_h\r_\tau)(\cdot,t)\|_{(H^s(\Omega))'}
  =\left\{\begin{array}{ll}\|\r_{k+m}-\r_{k-1}\|_{(H^s(\Omega))'}
  & \textnormal{if} \ t_{k+m-1}<t\leq t_{k-1}+h \\
  \|\r_{k+m}-\r_{k}\|_{(H^s(\Omega))'}
  & \textnormal{if} \ t_{k-1}+h<t\leq t_{k+m}
  \end{array}\right.
$$
We compute
\begin{align*}
  \|\r_\tau & -\sigma_h\r_\tau\|_{L^1(h,T;(H^s(\Omega))')}\\
  &=\int_h^{t_{m+1}}\|\r_\tau-\sigma_h\r_\tau\|_{(H^s(\Omega))'}dt
  +\sum_{k=2}^{N-m}\int_{t_{k+m-1}}^{t_{k+m}}\|\r_\tau-\sigma_h
  \r_\tau\|_{(H^s(\Omega))'}dt\\
  &= \int_h^{t_{m+1}}\|\r_{m+1}-\r_1\|_{(H^s(\Omega))'}dt
  +\sum_{k=2}^{N-m}\int_{t_{k+m-1}}^{t_{k-1}+h}
  \|\r_{k+m}-\r_{k-1}\|_{(H^s(\Omega))'}dt  \\
  &\phantom{xx}{} +\sum_{k=2}^{N-m}\int_{t_{k-1}+h}^{t_{k+m}}
  \|\r_{k+m}-\r_{k}\|_{(H^s(\Omega))'}dt\\
  &=(t_{m+1}-h)\|\r_{m+1}-\r_1\|_{(H^s(\Omega))'}
  + (h-t_m)\sum_{k=2}^{N-m}\|\r_{k+m}-\r_{k-1}\|_{(H^s(\Omega))'} \\
  &\phantom{xx}{} +(t_{m+1}-h)\sum_{k=2}^{N-m}\|\r_{k+m}-\r_k\|_{(H^s(\Omega))'}.
\end{align*}
We employ the estimates $h-t_m\leq \tau$, $t_{m+1}-h\leq \tau$
and the triangle inequality:
\begin{align*}
  \|\r_\tau & -\sigma_h\r_\tau\|_{L^1(h,T;(H^s(\Omega))')}
  \leq \tau \sum_{j=2}^{m+1}\|\r_j-\r_{j-1}\|_{(H^s(\Omega))'}\\
  &\phantom{xx}{}+\tau\sum_{j=0}^m\sum_{k=2}^{N-m}
  \|\r_{k+j}-\r_{k+j-1}\|_{(H^s(\Omega))'}
  +\tau \sum_{j=0}^m\sum_{k=2}^{N-m}\|\r_{k+j}-\r_{k+j-1}\|_{(H^s(\Omega))'}.
\end{align*}
Since
\[
  \sum_{k=2}^{N-m}\sum_{j=i}^ma_{k+j}=\sum_{j=i}^m\sum_{\ell=2+j}^{N-m+j} a_\ell
  \leq (m-i+1)\sum_{\ell=2}^Na_\ell
  \]
for numbers $a_\ell\geq 0$ and all $0\leq i\leq m$, it follows that
\begin{align*}
  \|\r_\tau-\sigma_h\r_\tau\|_{L^1(h,T;(H^s(\Omega))')}
  &\leq 3(m+1)\tau \sum_{k=1}^{N-1}\|\r_{k+1}-\r_k\|_{(H^s(\Omega))'}\\
  &= 3(m+1)\int_\tau^T\|\r_\tau-\sigma_\tau\r_\tau\|_{(H^s(\Omega))'} dt.
\end{align*}
Thus, using $(m+1)\tau \leq h+\tau\leq 2h$,
\[
  \|\r_\tau-\sigma_h\r_\tau\|_{L^1(h,T;(H^s(\Omega))')}
  \leq \frac{6h}{\tau}\|\r_\tau-\sigma_\tau\r_\tau\|_{L^1(\tau,T;(H^s(\Omega))')}
  \leq Ch.
\]
We conclude that in both cases, for all $h>0$,
\[
  \|\r_\tau-\sigma_h\r_\tau\|_{L^1(h,T;(H^s(\Omega))')}\leq C h,
\]
and this estimate is uniform in $\tau>0$. We apply Lemma
\ref{lem.dub} to conclude the result.
\end{proof}


\section{Global existence of weak solutions}\label{sec.ex}

In this section, we prove Theorem \ref{thm.ex}. Let in the
following $\Omega\subset\mathbb{R}^{d}$ ($1\leq d\leq 3$) be a
bounded domain with $\partial\Omega \in C^{1,1}$. The smoothness
assumption on the boundary of the domain is needed for applying
elliptic regularity results.

\subsection{Solution of an approximate problem}

We show first the existence of a weak solution to an approximate problem
which is obtained by semi-discretizing \eqref{1.rho}-\eqref{1.c} with respect to time
and by regularizing the equation for the cell density. For this, let $T>0$ and
$K\in\N$ and split the time interval in the subintervals
$$
  (0,T] = \bigcup_{k=1}^K ((k-1)\tau,k\tau], \quad \tau = T/K.
$$
{}For given $(\rho_{k-1},c_{k-1})$, which approximates
$(\rho,c)$ at time $\tau(k-1)$, we wish to solve the approximate problem
in the weak formulation
\begin{align}
  \frac{1}{\tau} & \int_\Omega\big((\rho_k-\rho_{k-1})\phi
  + \alpha(c_k-c_{k-1})\psi\big)dx
  \nonumber \\
  &{}+ \int_\Omega\begin{pmatrix} \na\phi \\ \na\psi \end{pmatrix}^\top
  \begin{pmatrix}
  (m/n)\rho_k^{m-n+1} & -\delta\rho_k \\
  \delta\rho_k        & \delta
  \end{pmatrix}\begin{pmatrix} \na r_k \\ \na b_k \end{pmatrix}dx \label{2.semi} \\
  &{}+ \eps\int_\Omega(\Delta r_k\Delta\phi +|\nabla r_k|^2\nabla r_k\cdot\nabla \phi
  + r_k\phi)dx
  = \int_\Omega(\rho_k-c_k)\psi dx, \nonumber
\end{align}
where the entropy variables are given by
$$
  r_k = \frac{n}{n-1}\rho_k^{n-1}, \quad b_k = \frac{c_k}{\delta},
$$
and $(\phi,\psi)\in H^2(\Omega)\times H^1(\Omega)$ is a test function pair, well defined for $n>1$.
We prove now the existence of a solution to \eqref{2.semi} recalling that
\begin{equation}\label{2.p}
  p = \frac{m+n-1}{2}.
\end{equation}

\begin{remark}\rm\label{remark-regularisation}
For proving the existence of weak solutions, the regularization
$\eps\int_\Omega(\Delta r_k\Delta\phi +r_k\phi)dx$ would be
sufficient. The additional term is helpful when  deriving  energy
estimates which lead to the uniform boundedness of the solutions to the
parabolic-elliptic system, see Section \ref{sec.unif}.
\qed
\end{remark}

\begin{proposition}\label{prop.semi}
Let $\Omega\subset\R^d$ $(1\leq d\le 3)$ be a bounded domain with
$\pa\Omega\in C^{1,1}$. Furthermore, let
$(r_{k-1},$ $b_{k-1})\in
L^{n/(n-1)}(\Omega)\times L^2(\Omega)$ with $r_{k-1}\ge 0$ in $\Omega$ and
let $m>0$, $n>1$ be such that $1-2/d<p\le m$ with $p$ given by \eqref{2.p}.
Then there exists a weak solution
$(r_k,b_k)\in H^2(\Omega)\times H^1(\Omega)$ to \eqref{2.semi} satisfying
$r_k\ge 0$ in $\Omega$.
\end{proposition}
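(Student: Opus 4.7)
The strategy is the Leray--Schauder fixed-point theorem. The fixed-point map is defined by freezing the density in the nonlinear diffusion coefficients and then solving the resulting (quasi-)linear elliptic problem. Concretely: for $(\bar r,\bar b)$ in, say, $W^{1,4}(\Omega)\times L^2(\Omega)$ (a space into which $H^2\times H^1$ embeds compactly for $d\le 3$), and a homotopy parameter $\sigma\in[0,1]$, set $\bar\rho:=\bigl(\tfrac{n-1}{n}(\bar r)^+\bigr)^{1/(n-1)}\ge 0$ and $\bar c:=\delta\bar b$. The hypothesis $p\le m$ is equivalent to $m-n+1\ge 0$, so $\bar\rho^{m-n+1}$ is a bounded continuous function of $\bar r$ and no singularity appears at $\bar\rho=0$. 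Then one solves for $(r,b)\in H^2(\Omega)\times H^1(\Omega)$ the modification of \eqref{2.semi} in which the diffusion coefficients are evaluated at $\bar\rho$, the time-discretization and reaction terms are multiplied by $\sigma$, and the $\eps$-regularization is kept as is. A fixed point at $\sigma=1$ is a solution to \eqref{2.semi}.

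Existence for the linearized-in-coefficients problem follows from the theory of pseudomonotone coercive operators (Browder--Minty). The operator $\mathcal A:H^2\times H^1\to(H^2\times H^1)'$ is the sum of a bounded linear piece (the frozen-coefficient diffusion, the linear part of the $\eps$-regularization, and the time-discretization/reaction) plus the monotone nonlinearity $r\mapsto\eps|\na r|^2\na r$. The key observation is that testing with $(r,b)$ itself cancels the skew-symmetric cross-diffusion entries $\pm\delta\bar\rho$, leaving only
\[
  \int_\Omega\Bigl(\tfrac{m}{n}\bar\rho^{m-n+1}|\na r|^2+\delta|\na b|^2\Bigr)dx+\eps\int_\Omega\bigl(|\Delta r|^2+|\na r|^4+r^2\bigr)dx,
\]
plus the $L^2$-mass contributions produced by $\tfrac{\sigma}{\tau}\int(\rho(r)-\rho_{k-1})r\,dx$ and its analogue for $b$. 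Since $\pa\Omega\in C^{1,1}$, elliptic regularity upgrades $\eps(\|\Delta r\|_{L^2}^2+\|r\|_{L^2}^2)$ to $\eps\|r\|_{H^2}^2$, and on $b$ one uses the Poincar\'e inequality together with the $+\int c_k\psi\,dx$ term. This yields coercivity on $H^2\times H^1$, so Browder--Minty applies. Continuity and compactness of the resulting solution operator $S_\sigma$ from $W^{1,4}\times L^2$ to itself follow from the compact embedding $H^2\hookrightarrow W^{1,4}$ in $d\le 3$ and the Nemytskii continuity of $\bar r\mapsto\bar\rho^{m-n+1}$ on $L^\infty$.

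The uniform a priori bound required by Leray--Schauder is supplied by the discrete entropy estimate: at a fixed point one tests with $(r_k,b_k)$, which cancels the cross-diffusion, and convexity of $E$ produces the difference $E(\rho_k,c_k)-E(\rho_{k-1},c_{k-1})$. The source $\int\rho_k c_k\,dx$ is absorbed by \eqref{1.aux} (whose hypothesis $1-2/d<p$ holds under our assumptions), yielding a $\sigma$-independent bound on $\rho_k^{\,p}$ and $c_k$ in $H^1$; combined with the $\eps$-regularization this gives the required $H^2$-bound on $r_k$. Non-negativity of $\rho_k$ is automatic from $\bar\rho=\bigl((n-1)\bar r^+/n\bigr)^{1/(n-1)}$, and the identity $r_k=(n/(n-1))\rho_k^{n-1}$ is recovered by proving $r_k\ge 0$: test the weak formulation against a smooth monotone approximation of $-(r_k)^-$ and use that $\bar\rho^{m-n+1}$ vanishes on $\{\bar r<0\}$ together with the $\eps r\phi$-contribution, forcing $(r_k)^-\equiv 0$. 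The main obstacle is the lack of strict positive definiteness of the original diffusion matrix and the potential degeneracy of $\bar\rho^{m-n+1}$ at $\bar\rho=0$; precisely the assumption $p\le m$ keeps that coefficient bounded, and the quartic $\eps|\na r|^4$ term provides the coercivity margin that both lets Browder--Minty apply and drives the negative part of $r_k$ to zero.
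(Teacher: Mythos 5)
Your proposal is correct and follows essentially the same route as the paper: a Leray--Schauder fixed point with frozen coefficients $\bar\rho^{\,m-n+1}$ and $\bar\rho$ (using $p\le m\Leftrightarrow m-n+1\ge 0$), coercivity in $H^2(\Omega)\times H^1(\Omega)$ supplied by the $\eps$-regularization and elliptic regularity on the $C^{1,1}$ domain, a $\sigma$-uniform bound from the discrete entropy estimate together with \eqref{1.aux}, and nonnegativity by testing with the negative part of $r$. The only deviation is that the paper also linearizes the quartic regularization term (replacing $|\na r|^2\na r$ by $|\na\bar r|^2\na r$) so that the frozen problem is strictly linear and solved by Lax--Milgram rather than by Browder--Minty for a monotone perturbation; both variants work.
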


\begin{proof}
{\em Step 1: Formulation of a modified problem.}
In order to solve \eqref{2.semi} in terms of $(r,b)$, we set
$$
  w(r) = \rho = \left(\frac{n-1}{n}r\right)^{1/(n-1)}, \quad r_+ = \max\{0,r\}.
$$
We wish to solve first the system
\begin{align}
  \frac{1}{\tau} & \int_\Omega\big((w(r_+)-w(r_{k-1}))\phi
  + \alpha \delta(b_k-b_{k-1})\psi\big)dx
  \nonumber \\
  &{}+ \int_\Omega\begin{pmatrix} \na\phi \\ \na\psi \end{pmatrix}^\top
  \begin{pmatrix}
  (m/n)w(r_+)^{m-n+1} & -\delta w(r_+) \\
  \delta w(r_+)       & \delta
  \end{pmatrix}\begin{pmatrix} \na r \\ \na b \end{pmatrix}dx \label{2.semi.rb} \\
  &{}+ \eps\int_\Omega(\Delta r\Delta\phi +|\nabla r|^2\nabla r\cdot\nabla \phi+ r\phi)dx
  = \int_\Omega(w(r_+)-\delta b)\psi dx, \nonumber
\end{align}
where $(\phi,\psi)\in H^2(\Omega)\times H^1(\Omega)$.
Notice that the assumption $p\le m$ is equivalent to $m-n+1\ge 0$, which is needed for
the term $w(r_+)^{m-n+1}$ to be well defined. The minimum principle
shows that any weak solution $(r,b)$ to this problem satisfies $r\ge 0$ in $\Omega$.
Indeed, using $(r_-,0)$, where $r_-=\min\{0,r\}$, as a test function, and
observing that $w(r_+)\na r_-=0$, we obtain
$$
  -\frac{1}{\tau}\int_\Omega w(r_{k-1})r_- dx
  + \frac{m}{n}\int_\Omega w(r_+)^{m-n+1}|\na r_-|^2 dx
  + \eps\int_\Omega((\Delta r_-)^2 + |\nabla r_-|^4+r_-^2)dx = 0.
$$
Since all three integrals on the right-hand side are nonnegative, we conclude that
$r_-=0$ and $r\ge 0$ in $\Omega$.

{\em Step 2: The linearized problem.} Let $\sigma\in[0,1]$
and $(\bar r,\bar b)\in H^{7/4}(\Omega)
\times L^2(\Omega)$ be given. The Sobolev embedding $H^{7/4}(\Omega)\hookrightarrow
C^0(\overline\Omega)$ for $d\le 3$ shows that $w(\bar r_+)$ is
bounded. Hence, the following linear problem is well defined:
\begin{equation}\label{2.LM}
  a((r,b),(\phi,\psi)) = \sigma f(\phi,\psi) \quad\mbox{for all }
  (\phi,\psi)\in H^2(\Omega)\times H^1(\Omega),
\end{equation}
where
\begin{align*}
  a((r,b),(\phi,\psi))
  &= \int_\Omega\begin{pmatrix} \na\phi \\ \na\psi \end{pmatrix}^\top
  \begin{pmatrix}
  (m/n)w(\bar r_+)^{m-n+1} & -\delta w(\bar r_+) \\
  \delta w(\bar r_+)       & \delta
  \end{pmatrix}\begin{pmatrix} \na r \\ \na b \end{pmatrix}dx \\
  &\phantom{xx}{}+ \eps\int_\Omega(\Delta r\Delta\phi
  +|\nabla \bar r|^2\nabla r\cdot\nabla \phi+ r\phi)dx
  + \delta \int_\Omega b\psi dx, \\
  f(\phi,\psi) &= -\frac{1}{\tau}\int_\Omega\big((w(\bar r_+)-w(r_{k-1}))\phi
  + \alpha\delta(\bar b-b_{k-1})\psi\big)dx + \int_\Omega w(\bar r_+)\psi dx.
\end{align*}
The function $a:(H^2(\Omega)\times H^1(\Omega))^2\to\R$ is
bilinear and continuous due to the Sobolev embedding
$H^2(\Omega)\subset H^{7/4}(\Omega)\subset W^{1,4}(\Omega)\subset
C^0(\bar\Omega)$ for $d\leq 3$. Here, we need the assumption
$m-n+1\ge 0$. The function $f:H^2(\Omega)\times H^1(\Omega)\to\R$
is linear and bounded which is a consequence of the estimate
$$
  \int_\Omega w(r_{k-1})\phi dx
  \le \|w(r_{k-1})\|_{L^n(\Omega)}\|\phi\|_{L^{n/(n-1)}(\Omega)}
  \le C\|\phi\|_{H^2(\Omega)},
$$
for some constant $C>0$,
since $r_{k-1}\in L^{n/(n-1)}(\Omega)$ gives $w(r_{k-1})\in L^n(\Omega)$.
Moreover, $a$ is coercive:
\begin{align*}
  a((r,b),(r,b))
  &= \int_\Omega\Big(\frac{m}{n}w(\bar r_+)^{m-n+1}|\na r|^2
  + \delta|\na b|^2\Big)dx \\
  &\phantom{xx}{}+ \eps\int_\Omega\big((\Delta r)^2 +|\nabla \bar r|^2|\nabla r|^2+ r^2\big)dx
  + \delta\int_\Omega b^2 dx \\
  &\ge C(\eps,\delta)\big(\|r\|_{H^2(\Omega)}^2 + \|b\|_{H^1(\Omega)}^2\big),
\end{align*}
for some constant $C(\eps,\delta)>0$, since $\partial \Omega \in C^{1,1}$ (see Troianiello \cite{Tro}, p.~194).
The Lax-Milgram lemma now implies the existence and uniqueness of a solution
$(r,b)\in H^2(\Omega)\times H^1(\Omega)$ to \eqref{2.LM}.

{\em Step 3: The nonlinear problem.} The previous step allows us to define the
fixed-point operator $S:[0,1]\times H^{7/4}(\Omega)\times L^2(\Omega)\to
H^{7/4}(\Omega)\times L^2(\Omega)$ by $S(\sigma,\bar r,\bar b) = (r,b)$,
where $(r,b)\in H^2(\Omega)\times H^1(\Omega)$ is the unique solution to \eqref{2.LM}.
It holds $S(0,\bar r,\bar b)=(0,0)$ for all $(\bar r,\bar b)\in
H^{7/4}(\Omega)\times L^2(\Omega)$. Standard arguments prove that $S$ is
continuous and compact, taken into account the compact embedding of
$H^2(\Omega)\times H^1(\Omega)$ into $H^{7/4}(\Omega)\times L^2(\Omega)$.

It remains to show that there exists a constant $C>0$ such that for all fixed points
$(r,b)\in H^{7/4}(\Omega)\times L^2(\Omega)$ and $\sigma\in[0,1]$ satisfying
$S(\sigma,r,b)=(r,b)$, the estimate
\begin{equation}\label{2.unif}
  \|(r,b)\|_{H^{7/4}(\Omega)\times L^2(\Omega)} \le C
\end{equation}
holds. Let $(r,b)$ be such a fixed point. Let us first assume that
$\sigma=1$. Then $(r,b)$ is a solution to \eqref{2.semi}. By the
first step of the proof, we have $r\ge 0$ in $\Omega$. Moreover,
we can easily derive a uniform $L^1$ bound for $\rho=w(r)$ by
employing $(1,0)$ as a test function in \eqref{2.semi}:
$$
  \int_\Omega \rho dx = \int_\Omega \rho_{k-1}dx - \tau\eps\int_\Omega r dx
  \le \int_\Omega \rho_{k-1}dx,
$$
since $r$ is nonnegative. By iteration, we infer that
\begin{equation}\label{2.L1}
  \|\rho\|_{L^1(\Omega)} \le \|\rho_0\|_{L^1(\Omega)}.
\end{equation}
The uniform estimate \eqref{2.unif} is a consequence of the
following discrete entropy estimate, which settles the case $\sigma=1$.
The case $\sigma<1$ can be treated similarly.

\begin{lemma}\label{lem.ent}
Let $(r,b)\in H^2(\Omega)\times H^1(\Omega)$ be a
solution to \eqref{2.semi} and let $1-2/d<p\le m$. Then
\begin{align*}
  E(\rho,c) &+ \frac{\tau mn}{2p^2}\|\na \rho^p\|_{L^2(\Omega)}^2
  + \frac{\tau}{2\delta}\|c\|_{H^1(\Omega)}^2 \\
  &{}+ \eps\tau \big(\|\Delta r\|_{L^2(\Omega)}^2
  +\|\nabla r\|_{L^4(\Omega)}^4+\| r\|_{L^2(\Omega)}^2\big)
  \le E(\rho_{k-1},c_{k-1}),
\end{align*}
where the entropy $E(\rho,c)$ is defined in \eqref{1.ent},
$\rho=w(r)=((n-1)r/n)^{1/(n-1)}$, $c = \delta b$, and $p$ is defined in
\eqref{2.p}.
\end{lemma}

In order to prove this lemma, we employ the test function
$(r,b)=(n\rho^{n-1}/(n-1),c/\delta)$ in \eqref{2.semi}:
\begin{align}
  \frac{1}{\tau} & \int_\Omega\Big(\frac{n}{n-1}\rho^{n-1}(\rho-\rho_{k-1})
  + \frac{\alpha}{\delta}c(c-c_{k-1})\Big)dx \nonumber \\
  &{}+ \int_\Omega\Big(\frac{mn}{p^2}|\na\rho^p|^2
  + \frac{1}{\delta}(|\na c|^2+c^2)\Big) dx
  + \eps\int_\Omega((\Delta r)^2 +|\nabla r|^4+ r^2)dx = \frac{1}{\delta}\int_\Omega \rho c dx,
  \label{2.aux1}
\end{align}
where $\rho_{k-1}=w(r_{k-1})$ and $c_{k-1} = \delta b_{k-1}$.
Since $n>1$, the mapping $g(x)=x^n$, $x\ge 0$, is convex,
which implies the inequality $g(x)-g(y)\le g'(x)(x-y)$ for all $x$, $y\ge 0$. Hence,
the first integral on the left-hand side of \eqref{2.aux1} is bounded from below by
$$
  \frac{1}{\tau}\int_\Omega\Big(\frac{1}{n-1}(\rho^n-\rho_{k-1}^n)
  + \frac{\alpha}{2\delta}(c^2-c_{k-1}^2)\Big)dx = \frac{1}{\tau}(E(\rho,c)-E(\rho_{k-1},c_{k-1})).
$$
{}For the estimate of the right-hand side of \eqref{2.aux1}, we employ first
the H\"older and Young inequalities:
$$
  \frac{1}{\delta}\int_\Omega \rho cdx
  \le \frac{1}{\delta}\|\rho\|_{L^q(\Omega)}\|c\|_{L^{q'}(\Omega)}
  \le \frac{1}{2\delta}\|\rho\|_{L^q(\Omega)}^2
  + \frac{1}{2\delta}\|c\|_{H^1(\Omega)}^2,
$$
where $q\ge 6/5$ if $d=3$, $1<q<\infty$ if $d\le 2$, and $q'=q/(q-1)$.
In the last step, we have used the continuous embedding $H^1(\Omega)
\hookrightarrow L^{q'}(\Omega)$ which is valid since $q'\le 6$ if $d=3$
and $q'<\infty$ if $d\le 2$. By Lemma \ref{lem.ineq}, we find that
$$
  \frac{1}{\delta}\int_\Omega \rho cdx
  \le \frac{mn}{2p^2}\|\na\rho^p\|_{L^2(\Omega)}^2 + C(\delta,\|\rho\|_{L^1(\Omega)})
  + \frac{1}{2\delta}\|c\|_{H^1(\Omega)}^2.
$$
The assumptions of the lemma are clearly satisfied for $d\leq 2$.
If $d=3$ we can choose $q=2d/(d+2)=6/5$ for $p\leq 1$ and
$q=p+1/2>6/5$ for $p>1$. Putting together the above estimates and
the $L^1$ bound \eqref{2.L1}, this finishes the proof of Lemma
\ref{lem.ent} and of Proposition \ref{prop.semi}.
\end{proof}


\subsection{Uniform estimates}

Let $(r_k,b_k)$ be a solution to the approximated problem \eqref{2.semi} and set
$\rho_k=w(r_k)$, $c_k=\delta b_k$.
We define the piecewise constant functions
$$
  (\rho^{(\eps,\tau)},r^{(\eps,\tau)},c^{(\eps,\tau)})(x,t) = (\rho_k,r_k,c_k)(x)
  \quad\mbox{for }x\in\Omega,\ t\in((k-1)\tau,k\tau].
$$
We denote by $D_\tau\rho(\cdot,t) = (\rho(\cdot,t)-\rho(\cdot,t-\tau))/\tau$
the discrete time derivative of $\rho(\cdot,t)$, where $t\ge \tau$.
In terms of the variables $(\rho^{(\eps,\tau)},c^{(\eps,\tau)})$, system
\eqref{2.semi.rb} can be formulated as
\begin{align}
  0 &= \int_\tau^T \langle D_\tau\rho^{(\eps,\tau)},\phi\rangle dt
  + \int_\tau^T\int_\Omega\big(\na(\rho^{(\eps,\tau)})^m - \rho^{(\eps,\tau)}
  \na c^{(\eps,\tau)}\big)\cdot\na\phi dx\,dt \nonumber \\
  &\phantom{xx}{}+ \eps\int_\tau^T\int_\Omega(\Delta r^{(\eps,\tau)}\Delta\phi +|\nabla r^{(\eps,\tau)}|^2\nabla r^{(\eps,\tau)}\cdot\nabla \phi
  + r^{(\eps,\tau)}\phi) dx\,dt, \label{3.tau.rho} \\
  0 &= \alpha \int_\tau^T\langle D_\tau c^{(\eps,\tau)},\psi \rangle dt
  + \int_\tau^T\int_\Omega\big(\na c^{(\eps,\tau)} + \delta\na(\rho^{(\eps,\tau)})^n\big)
  \cdot\na\psi dx\,dt \nonumber \\
  &\phantom{xx}{}+ \int_\tau^T\int_\Omega(\rho^{(\eps,\tau)}-c^{(\eps,\tau)})\psi dx\,dt
  \label{3.tau.c}
\end{align}
for all smooth test functions $\phi$ and $\psi$, where $\langle\cdot,\cdot\rangle$
is a dual product.
We set $\Omega_T=\Omega\times(0,T)$ for given $T>0$.
Before we can perform the limit $(\eps,\tau)\to 0$, we need to prove some
uniform bounds in $\eps$ and $\tau$. The following result is a consequence of
the discrete entropy estimate of Lemma \ref{lem.ent} and the $L^1$ bound
\eqref{2.L1}, after integrating with respect to time.

\begin{lemma}\label{lem.est.ent}
Let $T>0$ and $1-2/d<p\le m$. Then the following uniform bounds hold:
\begin{align}
  \|\rho^{(\eps,\tau)}\|_{L^\infty(0,T;L^1(\Omega)\cap L^n(\Omega))}
  + \|\na(\rho^{(\eps,\tau)})^p\|_{L^2(\Omega_T)} &\le C, \label{2.est.rho} \\
  \sqrt{\eps}\|r^{(\eps,\tau)}\|_{L^2(0,T;H^2(\Omega))} +\sqrt[4]{\eps}\|\nabla r^{(\eps,\tau)}\|_{L^4(\Omega_T)}&\le C, \label{2.est.r} \\
  \alpha\|c^{(\eps,\tau)}\|_{L^\infty(0,T;L^2(\Omega))}
  + \|c^{(\eps,\tau)}\|_{L^2(0,T;H^1(\Omega))} &\le C, \label{2.est.c}
\end{align}
where $C>0$ is here and in the following a generic constant independent of
$\eps$ and $\tau$, and $p$ is defined in \eqref{2.p}.
\end{lemma}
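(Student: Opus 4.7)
The plan is to iterate the one-step discrete entropy estimate of Lemma \ref{lem.ent} and read off the claimed bounds by recognising that the piecewise constant in time extensions convert sums $\tau\sum_k$ into Bochner norms on $(0,T)$.

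First I would sum the inequality of Lemma \ref{lem.ent}, applied at steps $1,2,\dots,K$, and exploit the telescoping structure on the left-hand side. Since all dissipation terms are nonnegative, this yields
\[
  E(\rho_K,c_K) + \sum_{k=1}^K\tau\Big(\tfrac{mn}{2p^2}\|\na\rho_k^p\|_{L^2(\Omega)}^2 + \tfrac{1}{2\delta}\|c_k\|_{H^1(\Omega)}^2 + \eps\bigl(\|\Delta r_k\|_{L^2(\Omega)}^2 + \|\na r_k\|_{L^4(\Omega)}^4 + \|r_k\|_{L^2(\Omega)}^2\bigr)\Big) \le E(\rho_0,c_0),
\]
and the right-hand side is finite by the hypotheses $\rho_0\in L^n(\Omega)$ and $\alpha c_0\in L^2(\Omega)$. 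Taking $K\tau=T$ and noting that $E(\rho_k,c_k)\le E(\rho_0,c_0)$ for every $k\le K$ gives a uniform bound on $\|\rho_k\|_{L^n(\Omega)}^n$ and (when $\alpha=1$) on $\|c_k\|_{L^2(\Omega)}^2$, both uniform in $k$, $\tau$, and $\eps$.

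Next I would translate these step-wise bounds into the claimed Bochner estimates on the piecewise constant interpolants. By definition, for a piecewise constant function $f^{(\eps,\tau)}(\cdot,t)=f_k$ on $((k-1)\tau,k\tau]$, one has $\|f^{(\eps,\tau)}\|_{L^\infty(0,T;X)}=\max_k\|f_k\|_X$ and $\|f^{(\eps,\tau)}\|_{L^q(0,T;X)}^q=\tau\sum_{k=1}^K\|f_k\|_X^q$. Consequently, the uniform bound on $\|\rho_k\|_{L^n(\Omega)}$ delivers the $L^\infty(0,T;L^n(\Omega))$ estimate in \eqref{2.est.rho}; the dissipation sum for $\na\rho_k^p$ gives the $L^2(\Omega_T)$ estimate in \eqref{2.est.rho}; the dissipation sums for $c_k$ give the $L^2(0,T;H^1(\Omega))$ bound in \eqref{2.est.c}; and the $\alpha$-weighted $L^\infty L^2$ bound on $c$ follows from the telescoped entropy. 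The $L^\infty(0,T;L^1(\Omega))$ bound on $\rho^{(\eps,\tau)}$ is obtained by iterating the pointwise-in-$k$ inequality \eqref{2.L1}, which gives $\|\rho_k\|_{L^1(\Omega)}\le\|\rho_0\|_{L^1(\Omega)}$ uniformly in $k$, $\tau$, $\eps$.

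The $\eps$-dependent estimates in \eqref{2.est.r} come from the regularisation terms in Lemma \ref{lem.ent}. The summed dissipation yields $\eps\|\Delta r^{(\eps,\tau)}\|_{L^2(\Omega_T)}^2 + \eps\|r^{(\eps,\tau)}\|_{L^2(\Omega_T)}^2 \le C$ and $\eps\|\na r^{(\eps,\tau)}\|_{L^4(\Omega_T)}^4 \le C$. Taking the fourth root gives the $L^4$ bound with weight $\sqrt[4]{\eps}$. For the $H^2$ bound, since $r_k\in H^2(\Omega)$ and $\pa\Omega\in C^{1,1}$, I would invoke elliptic regularity for the Neumann problem (as already used in Step 2 of the proof of Proposition \ref{prop.semi}, citing Troianiello) to control $\|r_k\|_{H^2(\Omega)}$ by $\|\Delta r_k\|_{L^2(\Omega)}+\|r_k\|_{L^2(\Omega)}$, which, squared, integrated in time, and multiplied by $\eps$, yields the $\sqrt{\eps}$-weighted $L^2(0,T;H^2(\Omega))$ bound.

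I do not expect a genuine obstacle here: the argument is entirely a matter of summation and bookkeeping of exponents. The only delicate point is ensuring the correct scaling in $\eps$ (linear for $H^2$, fourth power for $L^4$) and the consistent use of the Neumann elliptic regularity that relies on $\pa\Omega\in C^{1,1}$.
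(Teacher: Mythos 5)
Your proposal is correct and follows the same route the paper takes: the paper dispenses with a written proof by noting that the lemma "is a consequence of the discrete entropy estimate of Lemma \ref{lem.ent} and the $L^1$ bound \eqref{2.L1}, after integrating with respect to time," which is exactly your telescoping-plus-bookkeeping argument. Your explicit invocation of Neumann elliptic regularity to upgrade $\|\Delta r_k\|_{L^2(\Omega)}+\|r_k\|_{L^2(\Omega)}$ to the full $H^2(\Omega)$ norm is the same ingredient the paper already uses (via Troianiello) in the coercivity step of Proposition \ref{prop.semi}, so nothing is missing.
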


Under additional assumptions on the exponents $n$ and $p$,
we are able to derive more a priori estimates.

\begin{lemma}\label{lem.est.s}
Let $p\le\min\{m,n\}$ and $Q:=n/d+p>1$ and set
$$
  s_1 = \frac{2Q}{Q+m-p} \in(1,2], \quad s_2 = \frac{2Q}{Q+n-p} \in(1,2], \quad
  s_3 = \frac{2Q}{Q+1} \in(1,2).
$$
Then the following uniform bounds hold:
\begin{align}
  \|(\rho^{(\eps,\tau)})^p\|_{L^2(0,T;H^1(\Omega))}
  + \|\rho^{(\eps,\tau)}\|_{L^{2Q}(\Omega_T)} &\le C, \label{2.est.p} \\
  \|(\rho^{(\eps,\tau)})^m\|_{L^{s_1}(0,T;W^{1,s_1}(\Omega))}
  + \|(\rho^{(\eps,\tau)})^n\|_{L^{s_2}(0,T;W^{1,s_2}(\Omega))} &\le C,
  \label{2.est.mn} \\
  \|\rho^{(\eps,\tau)}\na c^{(\eps,\tau)}\|_{L^{s_3}(\Omega_T)} &\le C,
  \label{2.est.rhoc} \\
  \|D_\tau\rho^{(\eps,\tau)}\|_{L^{\tilde s}(\tau,T;(H^{3}(\Omega))')}
  + \alpha \|D_\tau c^{(\eps,\tau)}\|_{L^{s}(\tau,T;(H^{3}(\Omega))')} &\le C,
  \label{2.est.tau}
\end{align}
where $s=\min\{s_1,s_2,s_3\}$, $\tilde s=\min\{s,4/3\}$, and $p$ is defined in \eqref{2.p}.
\end{lemma}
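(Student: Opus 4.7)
The plan is to extract the bounds \eqref{2.est.p}--\eqref{2.est.rhoc} from the entropy estimate of Lemma \ref{lem.est.ent} by parabolic Gagliardo--Nirenberg interpolation and the chain rule, and then to read the discrete-time-derivative bound \eqref{2.est.tau} off the weak formulations \eqref{3.tau.rho}--\eqref{3.tau.c} by a duality argument against test functions in $L^{\tilde s'}(\tau,T;H^3(\Omega))$. First, Lemma \ref{lem.est.ent} already supplies $\na(\r^{(\eps,\tau)})^p\in L^2(\Omega_T)$ together with $\r^{(\eps,\tau)}\in L^\infty(0,T;L^n\cap L^1)$. Since $p\le n$, Jensen's inequality (for $p\le 1$) or H\"older's inequality (for $p>1$) on the bounded domain $\Omega$ converts the $L^n$-bound into a uniform bound for $(\r^{(\eps,\tau)})^p$ in $L^\infty(0,T;L^1(\Omega))$; combined with the gradient estimate and the Poincar\'e--Wirtinger inequality this gives the $L^2(0,T;H^1(\Omega))$-bound for $(\r^{(\eps,\tau)})^p$. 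The $L^{2Q}$-bound follows by interpolating between $(\r^{(\eps,\tau)})^p\in L^\infty(0,T;L^{n/p}(\Omega))$ and $(\r^{(\eps,\tau)})^p\in L^2(0,T;L^{2d/(d-2)}(\Omega))$ (the latter via Sobolev) in the standard parabolic way; solving for the exponent equalising space and time integrabilities produces exactly $(\r^{(\eps,\tau)})^p\in L^{2Q/p}(\Omega_T)$, i.e.\ $\r^{(\eps,\tau)}\in L^{2Q}(\Omega_T)$, with $Q=n/d+p$ dropping out as the balance exponent.

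For \eqref{2.est.mn} and \eqref{2.est.rhoc} I would use the chain rule $\na\r^m=(m/p)\r^{m-p}\na\r^p$ (well defined because $p\le m$) together with H\"older with $1/s_1=(m-p)/(2Q)+1/2$, pairing $\r^{m-p}\in L^{2Q/(m-p)}(\Omega_T)$ with $\na\r^p\in L^2(\Omega_T)$; the $\r^n$-estimate is analogous, using $p\le n$ and $1/s_2=(n-p)/(2Q)+1/2$, and \eqref{2.est.rhoc} comes from pairing $\r\in L^{2Q}$ with $\na c\in L^2$ at exponent $1/s_3=1/(2Q)+1/2$. The norm of $\r^m$ itself (not just the gradient) is then controlled by the $L^{2Q/m}$-bound on $\r^m$ and $s_1\le 2Q/m$, and similarly for $\r^n$. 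For \eqref{2.est.tau} I test \eqref{3.tau.rho} against $\phi\in L^{\tilde s'}(\tau,T;H^3(\Omega))$ and bound each spatial term by H\"older, using the estimates just derived and the embedding $H^3(\Omega)\hookrightarrow W^{1,\infty}(\Omega)$ valid for $d\le 3$; choosing $\tilde s\le s$ synchronises the Lebesgue-in-time exponents. The $\eps$-regularization contributes the pieces $\eps\int\Delta r\,\Delta\phi$ and $\eps\int r\phi$, both of size $\sqrt{\eps}\,\|\phi\|_{L^2(H^2)}$ by \eqref{2.est.r}, and the quartic piece $\eps\int|\na r|^2\na r\cdot\na\phi\le\eps\|\na r\|_{L^4}^3\|\na\phi\|_{L^4}\le \eps^{1/4}C\|\na\phi\|_{L^4(\Omega_T)}$, which forces $\tilde s'\ge 4$, i.e.\ $\tilde s\le 4/3$; this is precisely why the exponent for $\pa_t\r$ degrades from $s$ to $\tilde s=\min\{s,4/3\}$. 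Testing \eqref{3.tau.c} against $\psi\in L^{s'}(\tau,T;H^3(\Omega))$ involves no analogous cubic-in-$\na r$ term, so $s$ alone suffices for $\alpha\pa_t c$.

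The main obstacle is the bookkeeping needed to verify that the H\"older exponents arising from the different nonlinear terms all aggregate into a single time exponent $s$ (respectively $\tilde s$) and that the resulting $s_i$ lie in the stated ranges. A direct computation shows that the assertions $s_1,s_2\in(1,2]$ and $s_3\in(1,2)$ follow from the hypotheses $p\le\min\{m,n\}$ (giving $s_i\le 2$) and $1-n/d<p$ (giving $s_i>1$); indeed, $s_i>1$ relies on $Q>1$, which is precisely the role played by the assumption $1-n/d<p$ in making the Gagliardo--Nirenberg balance admissible.
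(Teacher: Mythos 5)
Your proposal is correct and follows essentially the same route as the paper: Poincar\'e plus the $L^\infty(0,T;L^n)$ bound and $p\le n$ for the $H^1$ estimate on $\rho^p$, the parabolic Gagliardo--Nirenberg balance yielding $L^{2Q}$, the chain rule with H\"older at the exponents $1/s_i=1/2+(\cdot)/(2Q)$ for \eqref{2.est.mn}--\eqref{2.est.rhoc}, and the duality argument against $H^3$-valued test functions with the quartic regularization term forcing $\tilde s=\min\{s,4/3\}$. No gaps worth noting.
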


We remark that the condition $Q>1$ is equivalent to $p>1-n/d$,
which in particular implies the condition $p>1-2/d$ as explained
in the introduction after \eqref{1.aux}.

\begin{proof}
We set $\rho=\rho^{(\eps,\tau)}$ and $c=c^{(\eps,\tau)}$ to simplify the notation.
By the Poincar\'e inequality, we find that
$$
  \|\rho^p\|_{L^2(\Omega_T)}^2
  = \int_0^T\|\rho^p\|_{L^2(\Omega)}^2 dt
  \le C\int_0^T\|\na\rho^p\|_{L^2(\Omega)}^2 dt
  + C\int_0^T\|\rho^p\|_{L^1(\Omega)}^2dt.
$$
Since $\rho$ is uniformly bounded in $L^\infty(0,T;L^n(\Omega))$ and
since we have assumed that $p\le n$, the right-hand side of the above inequality
is uniformly bounded. This shows the uniform bound for $\rho^p$ in
$L^2(0,T;H^1(\Omega))$.
Next, the Gagliardo-Nirenberg inequality with $\theta=p/Q<1$ gives
\begin{align*}
  \|\rho\|_{L^{2Q}(\Omega_T)}^{2Q}
  &= \|\rho^p\|_{L^{2Q/p}(\Omega_T)}^{2Q/p}
  \le C\int_0^T\|\rho^p\|_{H^1(\Omega)}^{2Q\theta/p}
  \|\rho^p\|_{L^{n/p}(\Omega)}^{2Q(1-\theta)/p}dt \\
  &\le C\|\rho\|_{L^\infty(0,T;L^{n}(\Omega))}^{2Q(1-\theta)}
  \int_0^T\|\rho^p\|_{H^1(\Omega)}^2 dt \le C.
\end{align*}
This shows \eqref{2.est.p}.

For the proof of \eqref{2.est.mn}, we observe that $s_1>1$ is equivalent
to $n/d+n>1$, which is true since $n>1$, and that $s_1\le 2$ is equivalent
to $p\le m$, which holds by assumption.
Hence $s_1\in(1,2]$. Let first $s_1<2$. We apply the H\"older inequality with
exponents $\gamma=2/s_1>1$ and $\gamma'=2/(2-s_1)$:
\begin{align*}
  \|\na(\rho^m)\|_{L^{s_1}(\Omega_T)}^{s_1}
  &= \Big(\frac{m}{p}\Big)^{s_1}\int_0^T\int_\Omega \rho^{(m-p)s_1}
  |\na\rho^p|^{s_1} dx\,dt \\
  &\le C\|\rho\|_{L^{2Q}(\Omega_T)}^{(m-p)s_1}\|\na\rho^p\|_{L^2(\Omega_T)}^{s_1}
  \le C,
\end{align*}
because of \eqref{2.est.p}. If $s_1=2$, it follows that $m=p$, and the
conclusion still holds. The estimate for $\rho^m$ is shown in a similar way
by applying the H\"older inequality with exponents $\gamma=2/s_1>1$ (if $s_2<2$) and
$\gamma'=2/(2-s_1)$ to $\rho^m = \rho^{m-p}\rho^p$. Hence, $\rho^m$ is uniformly
bounded in $L^{s_1}(0,T;W^{1,s_1}(\Omega))$.

The estimates for $\na(\rho^n)$ and $\rho^n$ in $L^{s_2}(\Omega_T)$ are proved
analogously by replacing $s_1$ by $s_2$. The relation $s_2\le 2$ is equivalent
to $p\le n$, and $s_2>1$ is equivalent to $n/d+m>1$, which is true since
$n/d+m\ge n/d+p>1$ by assumption. This proves \eqref{2.est.mn}.

Estimate \eqref{2.est.rhoc} is again a consequence of the H\"older
inequality, with exponents $\alpha=2/s_3$ and $\alpha'=2/(2-s_3)$:
$$
  \|\rho\na c\|_{L^{s_3}(\Omega_T)}
  \le \|\rho\|_{L^{2s_3/(2-s_3)}(\Omega_T)}\|\na c\|_{L^2(\Omega_T)}
  = \|\rho\|_{L^{2Q}(\Omega_T)}\|\na c\|_{L^2(\Omega_T)} \le C.
$$

We turn now to the estimates for the discretized time derivatives.
Let $\phi\in L^{\tilde s'}(0,T;$ $H^3(\Omega))$, where $s'=s/(s-1)\ge 2$ and $\tilde s'=\max\{s',4\}$. Then,
using $1<s\le 2$ and \eqref{2.est.r}, \eqref{2.est.mn}, and \eqref{2.est.rhoc},
\begin{eqnarray*}
 && \left|\int_\tau^T\langle D_\tau\rho,\phi\rangle dt\right|\\
  &&\quad= \left|-\int_\tau^T\int_\Omega\big(\na(\rho^m)-\rho\na c\big)\cdot\na\phi dxdt
  -\eps\int_\tau^T\int_\Omega\big(\Delta r\Delta\phi +|\nabla r|^2\nabla r\cdot\nabla\phi+ r\phi)dxdt
  \right| \\
  &&\quad\le \big(\|\na(\rho^m)\|_{L^s(\Omega_T)}
  + \|\rho\na c\|_{L^s(\Omega_T)}\big)
  \|\na\phi\|_{L^{s'}(\Omega_T)} \\
  &&\quad\phantom{xx}{}+ \eps\|\Delta r\|_{L^2(\Omega_T)}\|\Delta\phi\|_{L^2(\Omega_T)} +\eps \|\nabla r\|^3_{L^4(\Omega_T)}\|\nabla \phi\|_{L^4(\Omega_T)}
  + \eps\|r\|_{L^2(\Omega_T)}\|\phi\|_{L^2(\Omega_T)} \\
  &&\quad\le C\|\phi\|_{L^{\tilde s'}(0,T;H^3(\Omega))}.
\end{eqnarray*}
Furthermore, since $\r\in L^{2Q}(\Omega_T)\subset L^2(\Omega_T)$, we obtain
\begin{align*}
  \left|\int_\tau^T\langle D_\tau c,\phi\rangle dt\right|
  &= \left|-\int_\tau^T\int_\Omega\big(\na c+\delta\na(\rho^n)\big)\cdot\na\phi dxdt
  + \int_\tau^T\int_\Omega(\rho-c)\phi dxdt\right| \\
  &\le \big(\|\na c\|_{L^s(\Omega_T)}+ \delta\|\na(\rho^n)\|_{L^s(\Omega_T)}\big)
  \|\nabla\phi\|_{L^{s'}(\Omega_T)} \\
  &\phantom{xx}{}+ \big(\|\rho\|_{L^2(\Omega_T)} + \|c\|_{L^2(\Omega_T)}\big)
  \|\phi\|_{L^2(\Omega_T)} \\
  &\le C\|\phi\|_{L^{s'}(0,T;H^3(\Omega))}.
\end{align*}
using \eqref{2.est.c}, \eqref{2.est.p}, and \eqref{2.est.mn}.
Thus we have proved \eqref{2.est.tau}.
\end{proof}


\subsection{The limit of vanishing approximation parameters.}

We show first the strong convergence of the sequence $(\rho^{(\eps,\tau)})$.

\begin{lemma}
Let the assumptions of Lemma \ref{lem.est.s} hold. Then, up to a subsequence,
\begin{equation}\label{2.conv.2Q}
  \rho^{(\eps,\tau)} \to \rho \quad\mbox{strongly in }L^{2r}(0,T;L^{2Q}(\Omega)),
\end{equation}
for all $r<Q$, where $Q=n/d+p>1$.
\end{lemma}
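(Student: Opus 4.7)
The plan is to combine the uniform estimates from Lemma \ref{lem.est.s} with a compactness argument and then to upgrade the resulting intermediate convergence to $L^{2r}(0,T;L^{2Q}(\Omega))$ by interpolating in time against the $L^{2Q}(\Omega_T)$ bound of \eqref{2.est.p}. Concretely, I would first establish strong convergence of a subsequence of $\rho^{(\eps,\tau)}$ in a space of the form $L^{A_0}(0,T;L^{2Q}(\Omega))$ for some $A_0>1$, and then interpolate between this and the uniform $L^{2Q}(0,T;L^{2Q}(\Omega))=L^{2Q}(\Omega_T)$ bound.

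The compactness step splits in two cases. When $p\ge 1$, Lemma \ref{lem.comp} applies directly with $q=2$ and $s=3$: the hypothesis $\|(\rho^{(\eps,\tau)})^p\|_{L^2(0,T;H^1(\Omega))}\le C$ is \eqref{2.est.p}, and the required first order time-difference bound follows from the estimate \eqref{2.est.tau} on $D_\tau\rho^{(\eps,\tau)}$ after embedding $L^{\tilde s}(\tau,T;(H^3(\Omega))')$ into $L^1(\tau,T;(H^3(\Omega))')$ on the bounded interval. Taking the spatial Sobolev exponent in the lemma to be $r=2Q/p$---which is admissible because $2Q/p<2d/(d-2)$ is equivalent to $p>n(d-2)/(2d)$, an inequality that one checks from $p>1-n/d$ together with the bounds $p\le n$ and $n>1$ imposed by Theorem~\ref{thm.ex}---yields relative compactness of $\rho^{(\eps,\tau)}$ in $L^{p\ell}(0,T;L^{2Q}(\Omega))$ for every $\ell<2$. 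When $p<1$, Lemma \ref{lem.comp} cannot be invoked. Instead, the chain rule $\nabla\rho=p^{-1}\rho^{1-p}\nabla\rho^p$ combined with H\"older's inequality and the $L^\infty(0,T;L^n(\Omega))$ bound provides a uniform estimate of $\rho^{(\eps,\tau)}$ in $L^2(0,T;W^{1,\sigma}(\Omega))$ with $\sigma=2n/(n+2-2p)>1$, the strict inequality being a consequence of $p>1-n/d$. A classical Aubin-Simon compactness argument together with the discrete time derivative bound then furnishes relative compactness in a Lebesgue space that, under the theorem's hypotheses, can again be taken of the form $L^{A_0}(0,T;L^{2Q}(\Omega))$ for some $A_0>1$.

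For the final interpolation step, set $h_n(t):=\|\rho^{(\eps_n,\tau_n)}(t)-\rho(t)\|_{L^{2Q}(\Omega)}$. The compactness step yields $h_n\to 0$ in $L^{A_0}(0,T)$, while the triangle inequality and \eqref{2.est.p} give the uniform bound $\sup_n\|h_n\|_{L^{2Q}(0,T)}\le C$. Classical $L^p$-interpolation in the time variable then produces
\[
\|h_n\|_{L^{2r}(0,T)}\le \|h_n\|_{L^{A_0}(0,T)}^{\theta}\|h_n\|_{L^{2Q}(0,T)}^{1-\theta}\longrightarrow 0
\]
for every $r$ with $A_0\le 2r<2Q$, where $\theta\in(0,1)$ is determined by $1/(2r)=\theta/A_0+(1-\theta)/(2Q)$. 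The complementary range $2r<A_0$ is covered directly by the compactness step via the embedding $L^{A_0}(0,T)\hookrightarrow L^{2r}(0,T)$ on a bounded interval. Since $A_0$ can be chosen arbitrarily close to $2p$ in the case $p\ge 1$ (by letting $\ell\uparrow 2$ in Lemma \ref{lem.comp}), this covers every $r<Q$, which is the claimed strong convergence.

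I expect the main difficulty to be the case $p<1$, where the Dubinskii-type Lemma \ref{lem.comp} is unavailable and one must instead tailor an Aubin-Simon argument whose output is a Lebesgue space rich enough in the spatial direction to contain $L^{2Q}(\Omega)$; verifying that the Sobolev-embedding exponents produced by the chain rule align with the constraints $p>1-n/d$ and $p\le n$ is the crux of the argument. Once the compactness step is completed, the interpolation in time is elementary.
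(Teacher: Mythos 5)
Your proposal is correct in outline and follows the same architecture as the paper's proof: a case split on $p\ge 1$ versus $p<1$, with Lemma \ref{lem.comp} handling the first case and a chain-rule/H\"older argument plus Aubin--Simon compactness handling the second, followed by an upgrade of the time exponent. Two differences are worth recording. First, in the case $p<1$ you apply H\"older in space only, pairing $\na\rho^p\in L^2$ against the $L^\infty(0,T;L^n(\Omega))$ bound to land in $L^2(0,T;W^{1,\sigma}(\Omega))$ with $\sigma=2n/(n+2-2p)$, whereas the paper applies H\"older in space--time against the $L^{2Q}(\Omega_T)$ bound of \eqref{2.est.p}, obtaining $L^\ell(0,T;W^{1,\ell}(\Omega))$ with the larger exponent $\ell=2Q/(Q+1-p)$ and a compact-embedding condition that reduces to the transparently verifiable inequality $2n/d+m>1$. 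You flag the compact embedding $W^{1,\sigma}(\Omega)\hookrightarrow\hookrightarrow L^{2Q}(\Omega)$ as the crux but do not carry it out; it does hold, and in fact a direct computation shows that for your $\sigma$ it is \emph{exactly} equivalent to $Q>1$ (the inequality is an identity at $Q=1$), so your route works but sits right at the margin and genuinely requires $Q>1$ strictly --- you should supply this computation rather than defer it. (A smaller quibble: your justification of $2Q/p<2d/(d-2)$ from ``$p>1-n/d$, $p\le n$, $n>1$'' alone is insufficient for large $n$; one also needs $m\ge n-1$, i.e.\ $p\ge n-1$, which is part of \eqref{conditions}.) Second, your explicit final interpolation of $h_n(t)=\|\rho^{(\eps_n,\tau_n)}(t)-\rho(t)\|_{L^{2Q}(\Omega)}$ between $L^{A_0}(0,T)$ and $L^{2Q}(0,T)$ is a genuine addition: Lemma \ref{lem.comp} by itself only yields time exponents below $2p$ (or below $\ell<2$ in the $p<1$ case), which is strictly less than the claimed $2Q=2p+2n/d$, so this step is actually needed to reach all $r<Q$; the paper leaves it implicit.
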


\begin{proof}
First, consider $p < 1$. We show that $(\rho^{(\eps,\tau)})$
is bounded in $L^\ell(0,T;$ $W^{1,\ell}(\Omega))$, where $\ell=2Q/(Q+1-p)>1$.
Notice that $1<\ell<2$ since the former inequality is equivalent to $Q>1-p$ which
is true since $Q>1$ by assumption, and the latter property is equivalent to $p<1$.
We apply the H\"older inequality to
$\na\rho^{(\eps,\tau)} = (1/p)(\rho^{(\eps,\tau)})^{1-p}\na(\rho^{(\eps,\tau)})^p$
yielding
\begin{align*}
  \|\na\rho^{(\eps,\tau)}\|_{L^\ell(\Omega_T)}
  &\le C\|(\rho^{(\eps,\tau)})^{1-p}\|_{L^{2\ell/(2-\ell)}(\Omega_T)}
  \|\na(\rho^{(\eps,\tau)})^p\|_{L^2(\Omega_T)} \\
  &= C\|\rho^{(\eps,\tau)}\|_{L^{2Q}(\Omega_T)}^{1-p}
  \|\na(\rho^{(\eps,\tau)})^p\|_{L^2(\Omega_T)},
\end{align*}
since $2\ell(1-p)/(2-\ell)=2Q$. Thus, taking into account the bound
for $(\rho^{(\eps,\tau)})$ in $L^{2Q}(\Omega_T)$, by \eqref{2.est.p},
this proves the desired bound.

Next, we claim that the embedding $W^{1,\ell}(\Omega)\hookrightarrow L^{2Q}(\Omega)$
is compact. This is the case if $1-d/\ell>-d/(2Q)$, which is equivalent to
$2n/d+m>1$. In order to show this inequality, we observe that the assumption
$p>1-2/d$ is equivalent to $n+m>3-4/d$. If $d=2$, this implies that
$2n/d+m=n+m>1$; if $d=3$, we find that
$$
  \frac{2n}{d}+m = \frac23(n+m) + \frac{m}{3} > \frac23(n+m) > \frac23\cdot\frac53 > 1.
$$
Hence, $1-d/\ell>-d/(2Q)$ is satisfied for $d\le 3$.
In view of the bound for the discrete time
derivative of $(\rho^{(\eps,\tau)})$, see \eqref{2.est.tau}, Lemma \ref{lem.comp}
(take $p=1$ in the lemma) implies the existence of a subsequence of
$(\rho^{(\eps,\tau)})$ (not relabeled) such that \eqref{2.conv.2Q} holds.

Finally, if $p\ge 1$, we can apply Lemma \ref{lem.comp} to conclude the strong
convergence result.
\end{proof}

The estimates of Lemmas \ref{lem.est.ent} and \ref{lem.est.s} imply
the existence of a subsequence of $(\rho^{(\eps,\tau)})$, which is not relabeled,
such that, as $(\eps,\tau)\to 0$,
\begin{align*}
  c^{(\eps,\tau)} \rightharpoonup c &\quad\mbox{weakly in }L^2(0,T;H^1(\Omega)), \\
  (\rho^{(\eps,\tau)})^m \rightharpoonup z_1 &\quad\mbox{weakly in }
  L^{s_1}(0,T;W^{1,s_1}(\Omega)), \\
  (\rho^{(\eps,\tau)})^n \rightharpoonup z_2 &\quad\mbox{weakly in }
  L^{s_2}(0,T;W^{1,s_2}(\Omega)), \\
  \rho^{(\eps,\tau)}\na c^{(\eps,\tau)} \rightharpoonup z_3 &\quad\mbox{weakly in }
  L^{s_3}(0,T;W^{1,s_3}(\Omega)), \\
  D_\tau\rho^{(\eps,\tau)} \rightharpoonup \pa_t\rho &\quad\mbox{weakly in }
  L^{\tilde s}(0,T;(H^{3}(\Omega))'), \\
  D_\tau c^{(\eps,\tau)} \rightharpoonup \pa_t c &\quad\mbox{weakly in }
  L^{s}(0,T;(H^{3}(\Omega))') \ \ \textnormal {if} \ \alpha>0, \\
  \eps r^{(\eps,\tau)} \rightharpoonup 0 &\quad\mbox{weakly in }L^2(0,T;H^2(\Omega)),\\
  \eps |\nabla r^{(\eps,\tau)}|^2\nabla r^{(\eps,\tau)} \rightharpoonup 0 &\quad\mbox{weakly in }L^{4/3}(\Omega_T).
\end{align*}
The limits of the nonlinearities are easily identified since (a subsequence of)
$(\rho^{(\eps,\tau)})$ converges strongly in $L^{2Q}(\Omega_T)$, where $Q > 1$.
Hence, up to a subsequence, $\rho^{(\eps,\tau)}\to \rho$ a.e.\ in $\Omega_T$
and $(\rho^{(\eps,\tau)})^m\to \rho^m$, $(\rho^{(\eps,\tau)})\to \rho^n$ a.e.,
implying that $z_1=\rho^m$, $z_2=\rho^n$. Moreover, the strong convergence of
$(\rho^{(\eps,\tau)})$ in $L^2(\Omega_T)$ and the weak convergence of
$\na c^{(\eps,\tau)}$ in $L^2(\Omega_T)$ give the weak convergence of
$(\rho^{(\eps,\tau)}\na c^{(\eps,\tau)})$ to $\rho\na c$ in $L^1(\Omega_T)$,
implying that $z_3=\rho\na c$.

The above convergence results are sufficient to pass to the limit
$(\eps,\tau)\to 0$ in \eqref{3.tau.rho}-\eqref{3.tau.c}
leading to \eqref{1.rho}-\eqref{1.c}.
The Neumann boundary conditions are satisfied in the weak sense, and the
initial conditions hold in the sense of $L^{\tilde s}(0,T;(H^{3}(\Omega))')$.
Since in the limiting equation the regularizing terms vanish, test functions in
$L^{s'}(0,T;W^{1,s'}(\Omega))$ are sufficient to obtain the boundedness of the diffusion
and drift terms. A density argument now completes the proof.


\section{The parabolic-elliptic system}\label{sec.unif}
The parabolic-elliptic system corresponding to (\ref{1.c}) is
given by
\begin{equation}\label{par-ell}
  \pa_t \rho = \Delta \rho^m -\diver(\rho \nabla c), \quad
  0 = \Delta c+\delta \Delta \rho^n +\rho-c,
\end{equation}
subject to the no-flux boundary conditions
\begin{equation}\label{bc-parell}
  (\nabla \rho^m - \r\nabla c)\cdot \nu =0, \quad
  \nabla (c +\delta \rho^n)\cdot \nu= 0 \quad \textnormal{on } \partial \Omega,\ t>0,
\end{equation}
and the initial condition
\begin{equation}\label{id-parell}
  \rho(\cdot,0)=\rho_0 \quad \textnormal{in } \Omega.
\end{equation}
Similarly as in \cite{HiJu11} we introduce a new unknown corresponding
to the diffusion terms in the second equation,
$v=c+\delta\rho^n$,
and rewrite system (\ref{par-ell}) in terms of $\rho$ and $v$:
\begin{align}\label{par-ell-v}
  \begin{array}{lcl}
  \pa_t \rho&=&\Delta \Big(\rho^m+\delta\dfrac{n}{n+1}\rho^{n+1}\Big)
  -\diver (\rho \nabla v),\medskip\\
  0&=&\Delta v+\rho+\delta \rho^n-v,
  \end{array}
\end{align}
subject to the no-flux boundary conditions and
initial data \eqref{id-parell}. In the case $n=1$ one
simply obtains the Keller-Segel model with nonlinear diffusion,
which is known to prevent blow up \cite{CaCa06,Kow05,KoSz08}. Here,
the situation is different, since for $n>1$ we obtain additionally a
nonlinear growth term in the equation for $v$. We will show
that nevertheless this system satisfies the properties used by
Kowalczyk in \cite{Kow05} to obtain an $L^\infty(\Omega_T)$ bound for
$\rho$. The difference here is that instead of the conservation of
mass we have to make use of the uniform boundedness of
$\|\rho\|_{L^n(\Omega)}$, which only holds for finite times.

The main advantage of the parabolic-elliptic system is that it
allows for another entropy, since we will show that we can use
powers of $\rho$ as test functions in the elliptic equation, see Remark \ref{rem.weak}.

\begin{lemma}
Let $(\r,v)$ be the solution of the parabolic-elliptic system
\eqref{par-ell} constructed in Theorem \ref{thm.ex} with $\r_0 \in
L^n(\Omega)$ and $\r_0\geq 0$. Then
\[
  \r \geq 0, \quad v\geq 0 \quad  \textnormal{a.e. in } \Omega_T,
\]
and  $\r$ satisfies the additional entropy estimate
\begin{equation}\label{par-ell-energy}
  \sup_{t\in(0,T)}\int_\Omega \frac{\r(\cdot,t)^n}{n-1} dx
  +\frac{nm}{p^2}\|\nabla \rho^p\|_{L^2(\Omega_T)}^2 + \frac{\delta}{2}\|\nabla \rho^n\|^2_{L^2(\Omega_T)}\leq C \,.
\end{equation}
\end{lemma}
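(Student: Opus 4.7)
I plan to establish the three claims in sequence. Nonnegativity of $\rho$ is already built into the construction: Step~1 of Proposition~\ref{prop.semi} gives $r_k\geq 0$, hence $\rho_k=w(r_k)\geq 0$, and this is preserved in the a.e.\ limit. For $v$, I would use the rewriting $-\Delta v+v=\rho+\delta\rho^n\geq 0$ from \eqref{par-ell-v} with $\nabla v\cdot\nu=0$ (which is encoded in the boundary condition $\nabla(c+\delta\rho^n)\cdot\nu=0$), and test it against $v_-=\min\{0,v\}\in L^2(0,T;H^1(\Omega))$. Using $\nabla v\cdot\nabla v_-=|\nabla v_-|^2$ and $vv_-=v_-^2$ yields
\[
  \int_\Omega|\nabla v_-|^2\,dx+\int_\Omega v_-^2\,dx=\int_\Omega(\rho+\delta\rho^n)v_-\,dx\leq 0,
\]
forcing $v_-\equiv 0$ and hence $v\geq 0$ a.e.

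For the augmented entropy estimate, the plan is to work at the $\eps$-regularized semi-discrete level \eqref{2.semi} with $\alpha=0$ and combine two test-function choices. Picking $(\phi,\psi)=(r_k,0)$ reproduces the computation of Lemma~\ref{lem.ent} and, after invoking convexity of $x\mapsto x^n/(n-1)$ for the time-difference term and the identity $\rho_k\nabla r_k=\nabla\rho_k^n$, produces
\[
  \tfrac{1}{\tau(n-1)}\int_\Omega(\rho_k^n-\rho_{k-1}^n)\,dx+\tfrac{mn}{p^2}\|\nabla\rho_k^p\|_{L^2}^2+\eps(\cdots)\leq\int_\Omega\nabla\rho_k^n\cdot\nabla c_k\,dx.
\]
Picking $(\phi,\psi)=(0,\rho_k^n)$ in the elliptic line---admissible since $r_k\in H^2(\Omega)\hookrightarrow C^0(\overline\Omega)$ forces $\rho_k^n\in H^1(\Omega)$---and again using $\rho_k\nabla r_k=\nabla\rho_k^n$, yields the companion identity
\[
  \delta\|\nabla\rho_k^n\|_{L^2}^2+\int_\Omega\nabla\rho_k^n\cdot\nabla c_k\,dx=\int_\Omega\rho_k^{n+1}\,dx-\int_\Omega c_k\rho_k^n\,dx.
\]
Adding these two identities and dropping the nonnegative term $\int_\Omega c_k\rho_k^n\,dx$ by virtue of $c_k,\rho_k\geq 0$ cancels the mixed gradient and leaves
\[
  \tfrac{1}{\tau(n-1)}\int_\Omega(\rho_k^n-\rho_{k-1}^n)\,dx+\tfrac{mn}{p^2}\|\nabla\rho_k^p\|_{L^2}^2+\delta\|\nabla\rho_k^n\|_{L^2}^2+\eps(\cdots)\leq\int_\Omega\rho_k^{n+1}\,dx.
\]

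To close the estimate I would bound the right-hand side by Gagliardo--Nirenberg applied to $f=\rho_k^n$, interpolating $\|f\|_{L^{(n+1)/n}}$ between $\|\nabla f\|_{L^2}$ and $\|f\|_{L^1}=\int_\Omega\rho_k^n\,dx$, the latter being uniformly bounded already by the discrete entropy inequality of Lemma~\ref{lem.ent}. A short computation shows the resulting power of $\|\nabla\rho_k^n\|_{L^2}$ equals $2d/(n(d+2))$, which is strictly less than $2$ for every $n>1$ and $d\leq 3$, so Young's inequality absorbs this term into the left-hand side with prefactor $\delta/2$, leaving a residue depending only on $T$, $\delta$, $\Omega$ and the initial data. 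Summing over $k=1,\dots,K$, multiplying by $\tau$ and rewriting via the piecewise constant interpolants produces bounds uniform in $(\eps,\tau)$ on $\sup_t\int\rho^n/(n-1)$, $\|\nabla\rho^p\|_{L^2(\Omega_T)}^2$ and $\|\nabla\rho^n\|_{L^2(\Omega_T)}^2$; weak lower semicontinuity along the convergent subsequence extracted in Section~\ref{sec.ex} then delivers \eqref{par-ell-energy}.

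The main obstacle I anticipate is the legitimacy of $\rho^n$ as a test function in the elliptic equation: at the limit level we only know $\rho^n\in L^{s_2}(0,T;W^{1,s_2}(\Omega))$ with $s_2<2$ in general, which is strictly weaker than $L^2(0,T;H^1(\Omega))$ and hence insufficient for direct testing. Performing the whole computation at the regularized semi-discrete level, where the fourth-order regularization forces $r_k\in H^2$ and thereby $\rho_k^n\in H^1$, is what circumvents this and in fact underlies the improved test-function class for the parabolic-elliptic problem advertised in Remark~\ref{rem.weak}.
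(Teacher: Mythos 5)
Your overall strategy coincides with the paper's: nonnegativity of $v$ by testing $-\Delta v+v=\rho+\delta\rho^n$ with $v_-$, and the entropy estimate by testing the cell equation with $r$ at the regularized semi-discrete level and then eliminating the mixed term $\int_\Omega\nabla\rho^n\cdot\nabla c\,dx$ via the elliptic equation tested with $\rho^n$ (the paper phrases this as working with $v=c+\delta\rho^n$ and the combined diffusion $\rho^m+\delta\tfrac{n}{n+1}\rho^{n+1}$, which is the same bookkeeping). The justification that $\rho_k^n\in H^1(\Omega)$ at the regularized level via $r_k\in H^2(\Omega)\hookrightarrow C^0(\overline\Omega)$ is also the right reason this computation is legitimate.

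There is, however, one step that fails as written: you discard $-\int_\Omega c_k\rho_k^n\,dx$ ``by virtue of $c_k,\rho_k\ge 0$.'' Nonnegativity of $c$ is \emph{not} available here --- the equation for $c$ reads $-\Delta c+c=\delta\Delta\rho^n+\rho$, whose right-hand side has no sign, and the loss of the maximum principle for the chemical concentration is precisely the structural difficulty created by the cross-diffusion term (this is why the lemma asserts $v\ge 0$ and not $c\ge 0$). The correct move is to substitute $c_k=v_k-\delta\rho_k^n$, drop $-\int_\Omega v_k\rho_k^n\,dx\le 0$ using $v_k\ge 0$, and accept the extra term $\delta\int_\Omega\rho_k^{2n}\,dx$ on the right-hand side. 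This term must then also be interpolated: by Gagliardo--Nirenberg, $\|\rho^n\|_{L^2(\Omega)}^2\le C\|\nabla\rho^n\|_{L^2(\Omega)}^{2\theta_2}\|\rho^n\|_{L^1(\Omega)}^{2(1-\theta_2)}+\|\rho^n\|_{L^1(\Omega)}^2$ with $\theta_2=d/(d+2)$, so the exponent $2d/(d+2)<2$ again allows absorption into $\tfrac{\delta}{2}\|\nabla\rho^n\|_{L^2}^2$ by Young's inequality. With this correction your argument closes and matches the paper's proof; without it the claimed sign is simply unjustified and the estimate as stated does not follow.
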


\begin{proof}
We start again from the regularized problem to derive the
nonnegativity of $v$ and the additional entropy estimate
rigorously. As in Section \ref{sec.ex} we skip the index $k$:
\begin{align}
  \int_\Omega D_\tau \r \phi  dx
  &= -\int_\Omega \nabla \Big(\r^m +\delta \frac{n}{n+1}\r^{n+1}\Big)
  \cdot \nabla \phi dx
  + \int_\Omega \r\nabla v\cdot \nabla \phi dx   \nonumber\\
  \label{par-ell-reg}
  &\phantom{xx}{}
  -\eps\int_\Omega \big(\Delta r \Delta \phi +|\nabla r|^2\nabla r\cdot \nabla \phi
  + r\phi \big) dx, \\
  0 &= -\int_\Omega \nabla v \cdot \nabla \psi dx
  + \int_\Omega (\r+\delta \r^n)\psi dx - \int_\Omega v \psi dx,   \nonumber
\end{align}
for appropriate test functions $\phi$ and $\psi$.
The existence of a global weak solution is proven in Proposition
\ref{prop.semi}. The a priori nonnegativity of $v$ follows from a
standard argument by testing the equation for $v$ with
$v^-=\min\{0,v\}$, see, e.g., \cite{Kow05}. The
nonnegativity is clearly preserved when performing the limit of
vanishing parameters.

To derive the additional energy estimate we first use
$r=\frac{n}{n-1}\r^{n-1}$ as a test function in the equation for
$\r$, integrate in time, and insert the elliptic equation for $v$:
\begin{align*}
  \int_\Omega & \frac{\r^n}{n-1} dx -\int_\Omega \frac{\r_0^n}{n-1}dx
  +\frac{nm}{p^2}\|\nabla \rho^p\|_{L^2(\Omega_T)}^2
  +  \delta\|\nabla \rho^n\|^2_{L^2(\Omega_T)}\\
  &\leq \int_{\Omega_T}\nabla v\cdot\nabla \rho^n dx\, dt
  -\eps \big(\|\Delta r\|_{L^2(\Omega_T)}^2
  +\|\nabla r\|_{L^4(\Omega_T)}^4+\|r\|_{L^2(\Omega_T)}^2\big) \\
  &\leq \int_{\Omega_T}\nabla v\cdot\nabla \rho^n dx\, dt
  = \int_{\Omega_T} \rho^{n+1} dx\, dt
  +\delta \int_{\Omega_T} \rho^{2n} dx\, dt
  - \int_{\Omega_T} v\rho^n dx\, dt \\
  &\leq C +  \frac{\delta}{2}\|\nabla\rho^n\|^2_{L^2(\Omega_T)}.
\end{align*}
Here, we have used the nonnegativity of $\r$ and $v$ and the
uniform boundedness of $\|\r\|_{L^n(\Omega)}$ (with respect to $k$)
together with the Gagliardo-Nirenberg inequalities in the following way:
\begin{align*}
  \int_{\Omega} \r^{n+1}  dx
  &=\|\r^n\|^{(n+1)/n}_{L^{(n+1)/n}(\Omega)}
  \leq C\|\nabla\r^n\|^{(n+1)\theta_1/n}_{L^2(\Omega)}\|\r^n\|^{(n+1)
  (1-\theta_1)/n}_{L^1(\Omega)}+\|\r^n\|^{(n+1)/n}_{L^1(\Omega)}\\
  &\leq  C\|\nabla\r^n\|^{(n+1)\theta_1/n}_{L^2(\Omega)}
  +C\leq \frac{\delta}{4}\|\nabla \rho^n\|^2_{L^2(\Omega)} + C(\delta),
\end{align*}
where $\theta_1=2d/((n+1)(2+d))\in(0,1)$, and for $\theta_2=d/(d+2)$,
we obtain the bound
$$
  \int_{\Omega} \r^{2n}  dx
  =\|\r^n\|^{2}_{L^{2}(\Omega)}
  \leq C\|\nabla\r^n\|^{2\theta_2}_{L^2(\Omega)}\|\r^n\|^{2(1-\theta_2)}_{L^1(\Omega)}
  +\|\r^n\|^{2}_{L^1(\Omega)}
  \leq\frac{\delta}{4}\|\nabla \rho^n\|^2_{L^2(\Omega)} + C(\delta).
$$
Since the constant $C(\delta)$ is independent of $(\eps, \tau)$, we can
perform the limit of vanishing parameters, which completes the
proof. \end{proof}

\begin{theorem}\label{par-ell-thm}
Let $\r_0\in L^\infty(\Omega)$, where $\Omega \subset\R^d$ is a bounded
domain $(1\leq d\leq 3)$ with $\partial \Omega\in
C^{1,1}$, and let the assumptions \eqref{conditions} hold. Then
the parabolic-elliptic system \eqref{par-ell}-\eqref{id-parell}
has a global weak solution satisyfing
\[
  \|\r\|_{L^\infty(0,T;L^\infty(\Omega))}
  + \|c\|_{L^\infty(0,T;L^\infty(\Omega))}\leq C(T)
\]
for any $T>0$.
\end{theorem}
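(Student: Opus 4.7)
The existence of a global weak solution to \eqref{par-ell}-\eqref{id-parell} is already furnished by Theorem~\ref{thm.ex} with $\alpha=0$, so the essential task is to upgrade the regularity of $\r$ (and then of $c$) to $L^\infty$. I would work with the regularized scheme \eqref{par-ell-reg} introduced in the preceding lemma, derive bounds that are uniform in $(\eps,\tau)$ and in an iteration exponent $q$, pass to the limit $(\eps,\tau)\to 0$, and finally let $q\to\infty$. Throughout I keep the variables $(\r,v)$ with $v=c+\delta\r^n$, so that the elliptic equation becomes $-\Delta v+v=\r+\delta\r^n$ and, by the previous lemma, $v\ge 0$ a.e.\ in $\Omega_T$.

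The core of the argument is a Moser-type iteration in the spirit of \cite{Kow05,KoSz08}. I test the $\r$-equation of \eqref{par-ell-v} with $\r^{q-1}$ (for $q\ge n$) and rewrite the drift contribution using the elliptic equation for $v$:
$$
\int_\Omega \r^{q-1}\na\r\cdot\na v\,dx
=\frac{1}{q}\int_\Omega \na\r^q\cdot\na v\,dx
=-\frac{1}{q}\int_\Omega \r^q\Delta v\,dx
=\frac{1}{q}\int_\Omega\r^q(\r+\delta\r^n-v)\,dx.
$$
Combined with the two nonlinear diffusion terms coming from $\Delta(\r^m+\delta\frac{n}{n+1}\r^{n+1})$, this yields the differential inequality
\begin{align*}
\frac{1}{q}\frac{d}{dt}\|\r\|_{L^q(\Omega)}^q
&+ \frac{4m(q-1)}{(q+m-1)^2}\|\na\r^{(q+m-1)/2}\|_{L^2(\Omega)}^2
+ \frac{4\delta n(q-1)}{(q+n-1)^2}\|\na\r^{(q+n-1)/2}\|_{L^2(\Omega)}^2 \\
&+ \frac{q-1}{q}\int_\Omega \r^q v\,dx
\le \frac{q-1}{q}\int_\Omega \r^{q+1}\,dx + \frac{\delta(q-1)}{q}\int_\Omega \r^{q+n}\,dx.
\end{align*}
Since $v\ge 0$, the fourth term on the left is dropped. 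The two source terms on the right are then absorbed via the Gagliardo-Nirenberg inequality applied to $\r^{(q+1)/2}$ and $\r^{(q+n-1)/2}$, using the two gradient norms on the left, with the $L^n$ a priori bound from the additional entropy estimate \eqref{par-ell-energy} playing the role that mass conservation plays in the classical Kowalczyk argument (cf.\ the remark preceding the theorem).

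Iterating along a geometric sequence $q_k\to\infty$ and tracking the constants as in \cite[Section~4]{Kow05}, one obtains $\r\in L^\infty(0,T;L^{q_k}(\Omega))$ with a uniform-in-$k$ growth of norms, and hence $\|\r\|_{L^\infty(0,T;L^\infty(\Omega))}\le C(T)$. Given this bound, the right-hand side of $-\Delta v+v=\r+\delta\r^n$ belongs to $L^\infty(\Omega_T)$, so standard elliptic $L^p$-regularity (using $\pa\Omega\in C^{1,1}$) yields $v\in L^\infty(\Omega_T)$, and consequently $c=v-\delta\r^n\in L^\infty(\Omega_T)$ as well.

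The principal obstacle is the supercritical source $\int\r^{q+n}\,dx$: because $n>1$, this term grows faster than quadratically in $\r^{(q+n-1)/2}$, and only the cross-diffusion contribution $\|\na\r^{(q+n-1)/2}\|_{L^2}^2$, absent from the classical Keller-Segel model, provides the correct exponent to dominate it through Gagliardo-Nirenberg. Ensuring that the Young-type absorption is valid and that the constants in the recursion remain bounded on finite time intervals -- despite the fact that here the starting $L^n$ control is only a time-local substitute for mass conservation -- is the main technical step, and it is precisely at this point that the parameter range \eqref{conditions} enters in a nontrivial way.
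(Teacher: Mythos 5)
Your overall strategy is viable and shares the paper's key structural insight -- the cross-diffusion term supplies the coercive gradient that dominates the supercritical source, with the $L^n$ entropy bound \eqref{par-ell-energy} standing in for mass conservation -- but the organization of the iteration is genuinely different from the paper's. The paper performs the elliptic substitution $\int\na\r^q\cdot\na v\,dx=\int\r^q(\r+\delta\r^n-v)\,dx$ \emph{once}, at the single exponent $q=3n+1$, to obtain $\r\in L^\infty(0,T;L^{3n+1}(\Omega))$; elliptic regularity for $-\Delta v+v=\r+\delta\r^n$ and Sobolev embedding then give $\na v\in L^\infty(\Omega_T)$, and the Alikakos iteration is run in Kowalczyk's original form, testing with truncations $(q+1)(\r_K-k)_+^q$ so that the drift is controlled pointwise by $\|\na v\|_{L^\infty}$ and the cross-diffusion yields the clean coercive term $\delta n k^{n-1}\int|\na w^{(q+1)/2}|^2dx$, with no supercritical source surviving on the right-hand side. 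Your version instead repeats the elliptic substitution at every level $q$, which can be made to work but forces you to confront, uniformly in $q$, several issues the paper's two-step arrangement sidesteps: (i) the coefficient of the good term is $4\delta n(q-1)/(q+n)^2\sim\delta n/q$ while $\delta\int\r^{q+n}dx$ carries an $O(\delta)$ coefficient, so the Young constants in the Gagliardo--Nirenberg absorption grow like a power of $q$ and one must verify this is still compatible with the Alikakos limit; (ii) the regularization terms $\eps\int(\Delta r\,\Delta\r^{q-1}+|\na r|^2\na r\cdot\na\r^{q-1}+r\r^{q-1})dx$ are not sign-definite, and the paper needs a dedicated completing-the-square argument together with the bound $\eps\|\na r\|_{L^4(\Omega_T)}^4\le C$ even for the single exponent $q=3n$ -- you would have to redo this with $q$-independent control; (iii) the admissibility of the unbounded test function $\r^{q-1}$ in the limit equation (the paper uses the cutoff $\r_K$ and its Step~2 for exactly this reason). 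Two small slips: the cross-diffusion gradient produced by testing $\Delta(\frac{\delta n}{n+1}\r^{n+1})$ against $\r^{q-1}$ is $\na\r^{(q+n)/2}$, not $\na\r^{(q+n-1)/2}$ (then $\int\r^{q+n}dx=\|\r^{(q+n)/2}\|_{L^2}^2$ matches it exactly, as in the paper's Step~1); and the term $\int\r^{q+1}dx$ cannot be absorbed by the $m$-diffusion gradient $\na\r^{(q+m-1)/2}$ unless $m=2$ -- it should simply be folded into $\int\r^{q+n}dx+C$ using $n>1$, which is what the paper does via $3n+2<4n+1$.
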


In particular, test functions in $L^2(0,T;H^1(\Omega))$
are admissible for \eqref{par-ell}.

\begin{proof}
The iterative method of Alikakos used by
Kowalczyk to derive an $L^\infty(\Omega_T)$ bound for $\r$
requires test functions of the form $\r^q$ for $q\geq n$ as well
as the uniform boundedness of $\nabla v$. Thus, we prove first that
$\na v\in L^\infty(\Omega_T)$ and $\rho\in L^\infty(0,T;L^q(\Omega))$
for suitable $q>1$.

\emph{Step 1: Proof of $v\in L^\infty(0,T;W^{1,\infty}(\Omega))$.} If  $\r\in L^\infty(0,T;L^{3n+1}(\Omega))$, then elliptic regularity for
$$
  -\Delta v+v=\r+\delta \r^n\quad \textnormal{in }\Omega, \quad
  \nabla v\cdot \nu =0 \quad \textnormal{in }\partial\Omega
$$
implies that $v\in L^\infty(0,T;W^{2,3+1/n}(\Omega))$ (see, e.g., \cite{Gri}, p.~126), and hence, by  Sobolev embedding, $v\in L^\infty(0,T;W^{1,\infty}(\Omega))$.
In order to show that  $\r\in L^\infty(0,T;L^{3n+1}(\Omega))$, we employ in the regularized equation (\ref{par-ell-reg})  the test function $(3n+1) \r^{3n}$ and
integrate in time:
\begin{align*}
  \int_\Omega & \r^{3n+1} dx -\int_\Omega \r_0^{3n+1}dx
  + 2a\|\nabla \r^{(m+3n)/2}\|_{L^{2}(\Omega_T)}^2
  +2 \delta b\|\nabla\r^{2n+1/2}\|^2_{L^2(\Omega_T)}\\
  &\phantom{xx}{}+ (3n+1)\eps\left(\int_{\Omega_T}
  \Delta r\Delta \r^{3n}+|\nabla r|^2\nabla r\cdot\nabla \r^{3n}+r\r^{3n}\right)dx\,dt \\
  &\le 3n\int_{\Omega_T} \nabla \r^{3n+1}\cdot\nabla v dx dt\\
  &= 3n\int_{\Omega_T}\r^{3n+2}dx dt + 3\delta n\int_{\Omega_T} \r^{4n+1} dx\,dt
  - 3n \int_{\Omega_T} \r^{3n+1}v dx\,dt\\
  &\leq 4\delta n \int_{\Omega_T} \r^{4n+1} dx\,dt + C,
\end{align*}
where we have used the nonnegativity of $\r$ and $v$, H\"older's inequality,
and $3n+2<4n+1$. Moreover, we have set $a=6mn(3n+1)/(m+3n)^2$ and $b=6n^2(3n+1)/(4n+1)^2$. We apply the Gagliardo-Nirenberg inequality
with $\theta =d/(d+2)$ and H\"older's and Young's inequalities  to estimate
\begin{align*}
  \int_\Omega \r^{4n+1} dx
  &= \|\r^{2n+1/2}\|_{L^2(\Omega)}^2
  \leq C\|\nabla\r^{2n+1/2}\|_{L^2(\Omega)}^{2\theta}
  \|\rho^{2n+1/2}\|_{L^1(\Omega)}^{2(1-\theta)}
  + \|\r^{2n+1/2}\|_{L^1(\Omega)}^2 \\
  &\leq \frac{b}{4n}\|\nabla \r^{2n+1/2}\|_{L^2(\Omega)}^{2}
  + C \|\r^{2n+1/2}\|_{L^1(\Omega)}^2.
\end{align*}
The last summand is estimated by interpolating, for any $\beta>0$,
$$
  \|\r^{2n+1/2}\|_{L^1(\Omega)}^2
  \leq\|\r^{(3n+1)/2}\|_{L^2(\Omega)}^2\|\r^{n}\|_{L^1(\Omega)}
  \leq C\int_\Omega\r^{3n+1}dx\leq \beta \int_\Omega \r^{4n+1}dx +C(\beta),
$$
since $\r\in L^{\infty}(0,T;L^n(\Omega))$.
Finally, combining the above estimates and by choosing $\beta$ appropriately, we obtain
\[
  \int_{\Omega_T} \r^{4n+1} dx dt
  \leq \frac{b}{4n}\|\nabla \r^{2n+1/2}\|_{L^2(\Omega_T)}^{2}+ C.
\]

It remains to bound from below the terms arising from the regularization:
\begin{align*}
  \eps\int_{\Omega_T} & \big( \Delta r\Delta \r^{3n}+|\nabla r|^2\nabla r\cdot
  \nabla \r^{3n}+r\r^{3n}\big)dxdt \\
  &\geq 3 \eps\int_{\Omega_T} \left(\r^{2n+1}(\Delta r)^2 + 2\Big(1+\frac{1}{2n}\Big)
  \r^{n+2}|\nabla r|^2\Delta r +\r^{2n+1} |\nabla r|^4\right) dx\,dt \\
  &= 3 \eps\int_{\Omega_T} \left(\r^{n+1/2}\Delta r
  +\Big(1+\frac{1}{2n}\Big)\r^{3/2}|\nabla r|^2\right)^2dx\,dt\\
  &\phantom{xx}{} + 3\eps\int_{\Omega_T}\r^{3}\left(\r^{2(n-1)}
  - \Big(1+\frac{1}{2n}\Big)^2\right)|\nabla r|^4dx\,dt
\end{align*}\begin{align*}
  &\geq \eps\int_{\{\r^{n-1}\leq 1+1/2n\}}
  \r^3\left(\r^{2(n-1)}-\Big(1+\frac{1}{2n}\Big)^2\right) |\nabla r|^4 dx\,dt\\
  &\geq - \eps C\|\nabla r\|_{L^4(\Omega_T)}^4\geq -C\, ,
\end{align*}
where $C$ is independent of $(\eps, \tau)$ due to the basic entropy estimate for the regularized system.
Thus, we obtain in the limit $(\eps,\tau)\rightarrow 0$ for the weak  solution of (\ref{par-ell}),
\[
  \int_\Omega \r^{3n+1} dx  +\delta b\|\nabla\r^{2n+1/2}\|^2_{L^2(\Omega_T)}\leq C.
\]
This implies that $\r\in L^\infty(0,T;L^{3n+1}(\Omega))$, which proves the claim.

\emph{Step 2:} {\em Test functions in $L^2(0,T;H^1(\Omega))$ are admissible.}
We have to verify that
$\nabla \r^m\in L^2(0,T;L^2(\Omega))$. We recall that
the restrictions on the exponents are $n-1\leq m\leq n+1$ or, equivalently,
$p\leq m\leq n+1\leq 2n+1/2$. Therefore, in view of Step 1, we can interpolate
\begin{align*}
\frac{p^2}{m^2}\int_{\Omega_T}|\nabla \r^m|^2dx dt
  &=\int_{\Omega_T} \r^{2(m-p)}|\nabla \r^p|^2 dx dt \\
  &\leq \int_{\{\r\leq 1\}}|\nabla
  \r^{p}|^2dx\,dt+\int_{\{\r\geq1\}}\r^{2(2n+1/2-p)}|\nabla \r^p|^2dx\,dt\\
  &\leq \|\nabla \r^p\|_{L^2(\Omega_T)}^2
  +C\|\nabla\r^{2n+1/2}\|^2_{L^2(\Omega_T)}\leq C.
\end{align*}

\emph{Step 3:} We now proceed to make the estimates derived by Kowalczyk rigorous. To this end, we will use powers of the cut-off functions
$\r_K=\min\{\r,K\}$ as test functions. Due to (\ref{par-ell-energy}), $\r_K^q \in
L^2(0,T;H^1(\Omega))$ for any $q\geq n$ and according to Step 2, it is an
admissible test function. Let us introduce the notation
\[
  w(x,t)=(\r_K-k)_+ \quad \textnormal{for some } k>0.
\]
We test the equation for the cell density with $(q+1)w^q $, where $q\geq n$:
\begin{align*}
  \pa_t\int_{\Omega} w^{q+1} dx
  &=-\delta (q+1)\int_{\Omega} \nabla \r^n\cdot\nabla w^qdx
  - (q+1)\int_{\Omega} \nabla \r^{m} \cdot\nabla w^q dx \\
  &\phantom{xx}{} + (q+1)\int_{\Omega}  \r \nabla w^q\cdot\nabla v dx\\
  &\leq -\delta n(q+1)k^{n-1}\int_{\Omega} \nabla \r \cdot \nabla w^q dx
  -m q(q+1)\int_{\Omega} \ \r^{m-1} w^{q-1} |\nabla \r|^2dx \\
  &\phantom{xx}{}+ 2q \|\nabla v\|_{L^\infty(\Omega)}\left(\int_\Omega w^{(q+1)/2}
  |\nabla w^{(q+1)/2}| dx + k\int_{\Omega} w^{(q-1)/2} |\nabla w^{(q+1)/2}| dx
  \right)
\end{align*}
Neglecting the second term on the right-hand side and employing Young's inequality
to the last two terms, we arrive at
$$
  \pa_t\int_{\Omega} w^{q+1} dx
  \leq -\delta \frac{2nq}{q+1}k^{n-1}\int_{\Omega}
  |\nabla w^{(q+1)/2}|^2dx + C q(q+1)\int_{\Omega} w^{q+1} dx + Cq(q+1),
$$
where $C>0$ depends on, e.g., $\delta$ and $\|\nabla v\|_{L^\infty(\Omega)}$.
Starting from this  inequality, Kowalczyk \cite{Kow05} employed the iterative method of Alikakos to obtain
\[
  \|w(\cdot,t)\|_{L^\infty(\Omega)}\leq C(\|w_0\|_{L^\infty(\Omega)})
  \leq C(\|\r_0\|_{L^\infty(\Omega)}) \quad\textnormal{for all }0<t<T.
\]
Hence, for $\r_0\in L^{\infty}(\Omega_T)$, we have
$w=(\r_K-k)_+\in L^{\infty}(\Omega_T)$ for any $K$, $k>0$ with an independent bound for the norm. We can let $K\rightarrow \infty$ to deduce
\[
  \|\r(\cdot,t)\|_{L^\infty(\Omega)}\leq C(\|\r_0\|_{L^\infty(\Omega)})
  \quad \textnormal{for all }0<t<T,
\]
which finishes the proof.
\end{proof}


\section{Long-time behaviour}\label{sec.lt}

The (modified) Keller-Segel system possesses the constant homogeneous steady state
$\r^*=c^*=M/\mbox{meas}(\Omega)$.
Let us consider the following system, which is equivalent to \eqref{1.rho}-\eqref{1.c}:
\begin{align}
  \label{decay.rho}\pa_t\r &= \Delta \r^m -\diver(\r\nabla c), \\
  \label{decay.c}\alpha \pa_t c &= \Delta c+\delta \Delta \r^n +\r - \r^* - (c -c^*)
  \quad \mbox{in }\Omega,\ t>0.
\end{align}
In the case of linear diffusion terms, $m=n=1$, the decay of the relative entropy yields the convergence of the solution towards the homogeneous steady state for large enough $\delta$ if $d=2$ \cite{HiJu11}. The corresponding relative entropy for the system under consideration with nonlinear diffusion is
\[
  E^*(t)=\int_\Omega \left[\frac{(\r-\r^*)^n}{n-1}+\alpha \frac{(c-c^*)^2}{2\delta}
  \right] dx.
\]
Notice that the nonnegativity of $E^*$ is only guaranteed if $n$ is an even integer. In particular, $E^*$ is not well defined for general real $n>1$, since $\r-\r^*$ may be
negative. Formally, testing \eqref{decay.rho}-\eqref{decay.c} with $(n(\r-\r^*)^{n-1}/(n-1), (c-c^*)/\delta)$ we obtain the evolution equation for the relative entropy
\begin{align*}
  \frac{dE^*}{dt} &+mn\int_\Omega\r^{m-1} (\r-\r^*)^{n-2}|\nabla \r|^2 dx +
  \frac{1}{\delta}\|c-c^*\|^2_{H^1(\Omega)}\\
  &= n\int_\Omega \r(\r-\r^*)^{n-2}\nabla c\cdot \nabla \r dx - n\int_\Omega \r^{n-1}
  \nabla \r \cdot \nabla c dx + \frac{1}{\delta}\int_\Omega (\r-\r^*)(c-c^*) dx.
\end{align*}
We see that for general values $n>1$, also the entropy dissipation terms are not necessarily nonnegative. Moreover, the chemotactic drift term and the term arising from the cross-diffusion perturbation cancel out only if $n=2$.
These comments motivate us to consider the case $n=2$ only.
We recall that, due to \eqref{conditions}, $n=2$ implies that $1\leq m\leq 3$.
Thus we restrict us to the special case
\[
  m=1, \quad n=2,
\]
for which the global existence of a weak solution is guaranteed.
This choice allows us to show the decay of the solution to the
homogeneous steady state for certain values of $\delta$. For the
fully parabolic system we need additionally a smoothness
assumption on the solution, since the weak solution, obtained in
Theorem \ref{thm.ex}, cannot be used as a test function (in contrast to the
parabolic-elliptic system), see Remark \ref{rem.weak}. Notice that
we cannot start from the regularized problem, since there the
mass of $\r$ is not conserved, hence the system does not possesses a
constant homogeneous steady state.


Now, we are in the position to prove Proposition \ref{prop.decay}
(see the introduction).

\begin{proof}
\emph{Step 1: Decay of the relative entropy.}
We wish to employ $(2(\r-\r^*), (c-c^*)/2\delta)$ as test function in
\eqref{decay.rho}-\eqref{decay.c}, which is allowed if $\alpha=0$.
On the other hand, when $\alpha=1$, we need the smoothness assumption.

Since $n=2$, the evolution equation for the relative entropy
\[
  E^*(t)=\int_{\Omega} \left[(\r-\r^*)^2 +\alpha \frac{(c-c^*)^2}{2\delta}\right]dx
\]
reduces to
$$
  \frac{dE^*}{dt}+2\|\nabla \r\|_{L^2(\Omega)}^2
  + \frac{1}{\delta}\|c-c^*\|^2_{H^1(\Omega)}
  =\frac{1}{\delta}\int_\Omega (\r-\r^*)(c-c^*) dx.
$$
The difference to the entropy estimate in the existence proof is
that we can now apply the Poincar\'e inequality to $\r-\r^*$ to derive
the decay of the relative entropy for certain values of $\delta:$
\begin{align*}
  \frac{1}{\delta}\int_\Omega (\r-\r^*)(c-c^*) dx
  &\leq \frac{1}{\delta}\|\r-\r^*\|_{L^2(\Omega)}\|c-c^*\|_{L^2(\Omega)}
  \leq \frac{1}{2\delta}\|\r-\r^*\|^2_{L^2(\Omega)}
  + \frac{1}{2\delta}\|c-c^*\|^2_{L^2(\Omega)}\\
  &\leq \frac{C_P^2}{2\delta}\|\nabla\r\|^2_{L^2(\Omega)}
  + \frac{1}{2\delta}\|c-c^*\|^2_{L^2(\Omega)}.
\end{align*}
Finally, we obtain the entropy estimate
$$
  \frac{d}{dt}E^*(t)+\Big(2-\frac{C_P^2}{2\delta}\Big)
  \|\nabla \r\|^2_{L^2(\Omega)}+\frac{1}{2\delta}\|c-c^*\|^2_{H^1(\Omega)}
  \leq 0.
$$
We set $\kappa=\min\{1,4\delta-C_P^2\}/(4\delta)>0$. Then
\[
  \frac{d}{dt}E^* \leq -2\kappa E^*,
\]
implying the exponential decay of the relative entropy
\[
E^*(t)\leq E^*(0)e^{-2\kappa t}, \quad t>0.
\]
If $\alpha=1$ this immediately implies the desired decay of
$\r-\r^*$ and $c-c^*$ in $L^2(\Omega)$. For $\alpha=0$, the
relative entropy only gives the decay of $\r-\r^*$ in
$L^2(\Omega)$.

\emph{Step 2: Decay of $c-c^*$ for $\alpha=0$.}
Setting $v^*=c^*+\delta(\rho^*)^2$, we find that
\[
  v-v^* = c-c^*+\delta \big(\r^2-(\r^{*})^2\big)
  = c-c^*+2\delta \r^*(\r-\r^*)+\delta (\r-\r^*)^2.
\]
Replacing $c-c^*$ in the elliptic equation for $v$,
$$
  0 = \Delta v + \rho - c = \Delta v + (\rho-\rho^*) - (c-c^*),
$$
it follows that
\begin{equation}\label{ell.equ.v}
  0 = \Delta (v-v^*)+(1+2\delta \r^*)(\r-\r^*) + \delta (\r-\r^*)^2 -(v-v^*).
\end{equation}
Hence,
$$
  \|v-v^*\|^2_{H^1(\Omega)}
  = (1+2\delta \r^*)\int_\Omega (\r-\r^*)(v-v^*) dx
  +\delta \int_\Omega (\r-\r^{*})^2(v-v^*) dx.
$$
To determine the decay of $\|v-v^*\|_{H^1(\Omega)}$ we shall derive a
uniform bound for $v-v^*$. We prove in Step 3 below the
boundedness of $\r-\r^*$ in $L^\infty(0,\infty;L^4(\Omega))$.
Then elliptic regularity for \eqref{ell.equ.v} gives $v-v^*\in
L^\infty(0,\infty; H^2(\Omega))$ and, by Sobolev embedding,
$v-v^*\in L^\infty(0,\infty;L^{\infty}(\Omega))$. Hence,
proceeding with the above estimate and using Young's inequality,
we obtain
$$
  \|v-v^*\|^2_{H^1(\Omega)}
  \leq \frac12 \|v-v^*\|^2_{L^2(\Omega)} + C\|\r-\r^*\|^2_{L^2(\Omega)}
  + \delta\|v-v^*\|_{L^\infty(\Omega)}\|\r-\r^*\|^2_{L^2(\Omega)},
$$
such that
$$
  \|v-v^*\|_{H^1(\Omega)}^2
  \leq 2\big(C+\delta\|v-v^*\|_{L^\infty(\Omega)}\big) \|\r-\r^*\|^2_{L^2(\Omega)}
  \leq C e^{-2\kappa t}.
$$
This implies for the original unknown $c-c^*$ that
\[
  \|c-c^*\|_{L^1(\Omega)}
  \leq C\big(\|v-v^*\|_{L^2(\Omega)} + 2\delta\r^*\|\r-\r^*\|_{L^2(\Omega)}
  + \delta \|\r-\r^*\|^2_{L^2(\Omega)}\big)
  \leq Ce^{-\kappa t}.
\]

\emph{Step 3: Proof of $\r-\r^* \in L^\infty(0,\infty;L^4(\Omega))$.}
Using $4(\r-\r^*)^3$ as a test function in the first equation of \eqref{par-ell-v}
and employing the second equation in \eqref{par-ell-v}, we infer that
\begin{align*}
  \pa_t \int_\Omega & (\r-\r^*)^4 dx + 12 \int_\Omega (\r-\r^*)^2 |\nabla \r|^2 dx
  + 8\delta \int_\Omega \r^2 (\r-\r^*)^2 |\nabla \r|^2 dx \\
  &= 12 \int_\Omega \r(\r-\r^*)^2 \nabla \r\cdot \nabla v dx \\
  &= 3 \int_\Omega \nabla (\r-\r^*)^4\cdot \nabla v dx
  +  4\r^*\int_\Omega \nabla (\r-\r^*)^3\cdot\nabla v dx \\
  &= 4(1+2\delta\rho^*)\r^*\int_\Omega (\r-\r^*)^4 dx
  + (3+10\delta\r^*)\int_\Omega (\r-\r^*)^5 dx
  + 3\delta \int_\Omega (\r-\r^*)^6 dx \\
  &\phantom{xx}{}-  3\int_\Omega (v-v^*) (\r-\r^*)^4 dx
  - 4\r^*\int_\Omega(v-v^*)(\r-\r^*)^3 dx.
\end{align*}
The last two terms are bounded by using the nonnegativity of $\r$ and $v$ and
the Cauchy Schwarz inequality:
\begin{align*}
  -3\int_\Omega & (v-v^*) (\r-\r^*)^4 dx - 4\r^*\int_\Omega(v-v^*)\r(\r-\r^*)^2 dx
  + 4(\r^{*})^2\int_\Omega(v-v^*)(\r-\r^*)^2 dx \\
  &\leq  3v^* \int_\Omega (\r-\r^*)^4 dx
  + 4\r^*v^*\int_\Omega \r(\r-\r^*)^2 dx + 2(\r^{*})^2 \int_\Omega (v-v^*)^2 dx \\
  &\phantom{xx}{}+ 2(\r^{*})^2 \int_\Omega (\r-\r^*)^4 dx.
\end{align*}
Together with the estimate resulting from \eqref{ell.equ.v},
\[
  \|v-v^*\|^2_{H^1(\Omega)}
  \leq 2(1+2\delta \r^*)^2\int_\Omega(\r-\r^*)^2 dx
  + 2\delta^2 \int_\Omega (\r-\r^*)^4 dx,
\]
we obtain by interpolation:
\begin{align*}
  \pa_t \int_\Omega & (\r-\r^*)^4 dx + 12 \int_\Omega (\r-\r^*)^2 |\nabla \r|^2 dx
  + 8\delta \int_\Omega \r^2 (\r-\r^*)^2 |\nabla \r|^2 dx \\
  &\leq C \int_\Omega \left((\r-\r^*)^2 +  (\r-\r^*)^4+(\r-\r^*)^5 \right) dx
  + 3\delta \int_\Omega (\r-\r^*)^6 dx \\
  &\leq C(\delta) \int_\Omega (\r-\r^*)^2 dx
  + 4 \delta\int_\Omega(\r-\r^*)^6 dx.
\end{align*}
We already know the decay of $\|\r-\r^*\|_{L^2(\Omega)}$; hence, it remains to bound $\|\r-\r^*\|_{L^6(\Omega)}$ in terms of the entropy dissipation. To this aim, we use
the Gagliardo-Nirenberg with $\theta=d/(d+2)$ and the Young inequality:
\begin{align*}
  \|(\r & -\r^*)^3\|^2_{L^2(\Omega)}
  \leq C\big(\|\nabla (\r-\r^*)^3\|^{2\theta}_{L^2(\Omega)}
  \|(\r-\r^*)^3\|^{2(1-\theta)}_{L^1(\Omega)}
  + \|(\r-\r^*)^3\|^2_{L^1(\Omega)}\big) \\
  &\leq C\|\r(\r-\r^*)\nabla \r\|^{2}_{L^2(\Omega)}
  + C\|(\r-\r^*)\nabla \r\|^2_{L^2(\Omega)}
  + C \left(\int_\Omega |\r-\r^*|^3 dx\right)^2 \\
  &\leq C\|\r(\r-\r^*)\nabla \r\|^{2}_{L^2(\Omega)}
  + 6\|(\r-\r^*)\nabla \r\|^2_{L^2(\Omega)}
  + C\int_\Omega (\r-\r^*)^2 dx\int_\Omega (\r-\r^*)^4 dx.
\end{align*}
The decay $\|\r-\r^*\|_{L^2(\Omega)}\leq Ce^{-\kappa t}$ implies that
\begin{align*}
  \pa_t \int_\Omega & (\r-\r^*)^4 dx
  + 6 \int_\Omega (\r-\r^*)^2 |\nabla \r|^2 dx
  + 4\delta \int_\Omega \r^2 (\r-\r^*)^2 |\nabla \r|^2 dx \\
  &\leq Ce^{-2\kappa t} + Ce^{-2\kappa t}\int_\Omega (\r-\r^*)^4 dx,
\end{align*}
and we conclude that $\r-\r^* \in L^\infty(0,\infty;L^4(\Omega))$,
which completes the proof.
\end{proof}

\section{Numerical Simulations}\label{sec.numer}

This section is intended to illustrate numerically the solutions to the
fully parabolic system in two and three
space dimensions. We compare the results obtained for $\delta=0$ and
$\delta=0.005$ with various values for the exponent $n$ in \eqref{1.c}.
The simulations were carried out using the COMSOL Multiphysics package with
quadratic finite elements. The numerical solutions are for
illustration only; a more detailed comparison is the subject of
future work. We choose $\Omega=B_1(0)$ for simplicity.

\subsection*{The two-dimensional case}

We consider the fast-diffusion case $m=\frac12$ and prescribe the initial data
$$
  \r_0(x,y) = 80 (x^2+y^2-1)^2(x-0.1)^2+5, \quad c_0(x,y) = 0
  \quad\mbox{for }(x,y)\in B_1(0)
$$
with $M=\int_{\Omega} \r_0(x,y) dx = 25\pi/3 > 8\pi$ (see Figure \ref{fig.ic} left).
The maximal density is $\rho_{\rm max}=21.5$. We recall that solutions
to the classical parabolic-elliptic Keller-Segel model ($m=1$ and $\delta=0$)
blow up in finite time when the initial mass $M$ is sufficiently large.
More precisely, in the radial case, under an additional assumption on the
second moment, the solution blows up if $M>8\pi$ \cite{NSY97} or,
in the non-radial case, if $M>4\pi$ \cite{Nag01}.
Since $m>1$ leads to global existence results for the parabolic-elliptic system
\cite{CaCa06,Kow05}, one may conjecture that the cell density of the
parabolic-parabolic model blows up in finite time for $M>8\pi$ if $m<1$. We confirm
this conjecture numerically for the case $m=\frac12$ and the above initial datum.

\begin{figure}[ht]
\centering
\subfigure[$t=0$, $\r_{\rm max}=21.5$.]{
\includegraphics[width=63mm,height=58mm]{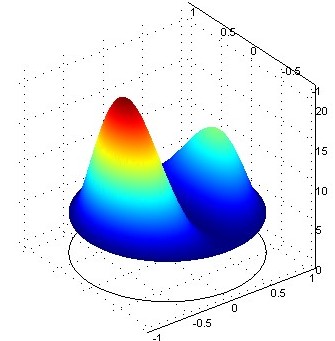}}
\subfigure[$t\approx 0.15$, $\r_{\rm max}\approx 1212$.]{
\includegraphics[width=63mm,height=58mm]{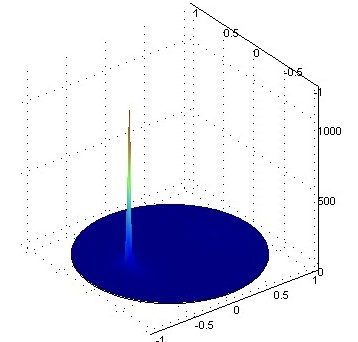}}
\caption{Cell density with $\delta=0$.}
\label{fig.ic}
\end{figure}

The nonlinear diffusion terms cause numerical difficulties whenever the
solution becomes close to zero. Indeed, the numerical solution may become
negative, and the simulations break down. Clearly, this can be handled by developing
a positivity-preserving numerical scheme, similarly as for the porous-medium
equation. Since we are using the black-box solver COMSOL Multiphysics,
we solve this problem simply by a projection method, i.e., we replace
diffusion terms by $\Delta(\max\{\rho,\eps\})^m$ with $m=\frac12$, $\eps=0.005$
and $\Delta(\max\{\rho,0\})^n$, respectively.

The cell density of the Keller-Segel model with $m=\frac12$ and $\delta=0$
at time $t\approx 0.15$ is depicted in Figure \ref{fig.ic} (right). Shortly after
that time, the simulations break down which indicates the blow up of solutions.
Surprisingly, the singularity forms in the interior of the domain in contrast
to the classical Keller-Segel model ($m=1$ and $\delta=0$) for which blow up
occurs at the boundary. Our numerical experiments confirm this behavior
for the model with $m=1$ and $\delta=0$ (results are not shown). Thus,
the unexpected behavior seems to be an effect of the fast cell diffusion.

Next, we turn to the case $m=\frac12$ and $\delta>0$. Figure \ref{fig.nd2}
shows the cell density at time $t=1000$ for various exponents
$n=\frac54,\frac{11}{8},\frac32,2$. The solutions have essentially reached
their steady state at $t=1000$. Notice that, according to \eqref{conditions},
the admissible parameter range for $n$ is $\frac54<n\le \frac32$.
Although some of the values for $n$ used in the simulations are theoretically
not admissible,
the solution exists numerically for all time. However, we observed numerical
difficulties for large values for $n$ (e.g.\ $n=6$) which may indicate that
the upper bound for $n$ in terms of $m$ is more than just a technical
assumption. We see from Figure \ref{fig.nd2} that the larger the value of $n$,
the more regular the solution becomes, at least in the tested parameter range
for $n$.

\begin{figure}[ht]
\centering
\subfigure[$n=\frac54$, $\rho_{\rm max}\approx 304$.]{
\includegraphics[width=63mm,height=58mm]{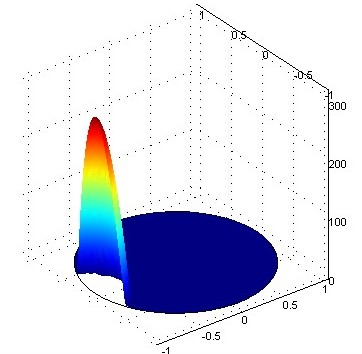}}
\subfigure[$n={11}{8}$, $\rho_{\rm max}\approx 174$.]{
\includegraphics[width=63mm,height=58mm]{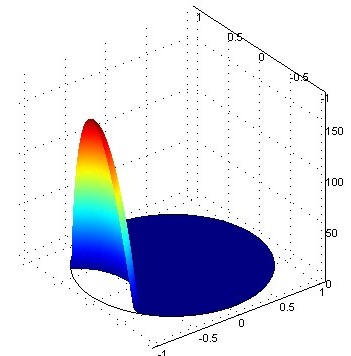}}
\subfigure[$n=\frac32$, $\rho_{\rm max}\approx 110$.]{
\includegraphics[width=63mm,height=58mm]{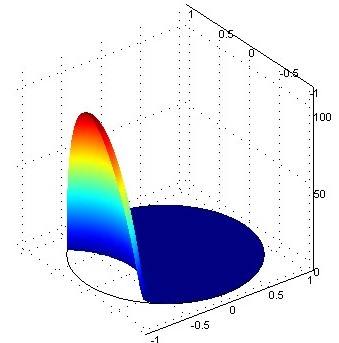}}
\subfigure[$n=2$, $\rho_{\rm max}\approx 30$.]{
\includegraphics[width=63mm,height=58mm]{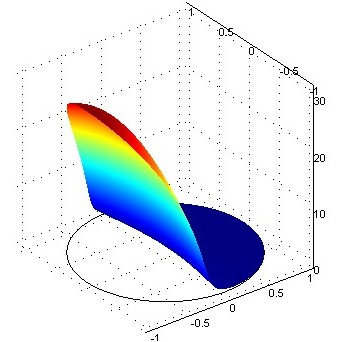}}
\caption{Cell density at time $t=1000$ with $\delta=0.005$.}
\label{fig.nd2}
\end{figure}

In the limit of vanishing additional cross-diffusion $\delta\to 0$, we expect
that the solutions converge to the solution to the corresponding Keller-Segel
model with $\delta=0$. This is numerically confirmed in Figure \ref{fig.ded2}.
For $\delta=0.005$, the cell density reaches its maximum at the boundary
(see Figure \ref{fig.nd2}), whereas the maximum is attained in the interior
of the domain for very small values of $\delta$. Thus,
it seems that the cross-diffusion regularization produces a stationary state
which is more concentrated on the boundary.

\begin{figure}[ht]
\centering
\subfigure[$\delta=10^{-4}$, $\rho_{\rm max}\approx 107$.]{
\includegraphics[width=63mm,height=58mm]{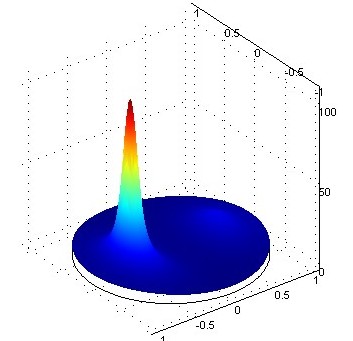}}
\subfigure[$\delta=10^{-6}$, $\rho_{\rm max}\approx 681$.]{
\includegraphics[width=63mm,height=58mm]{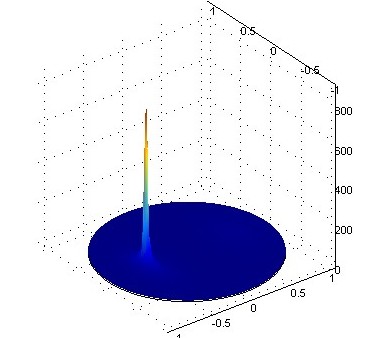}}
\subfigure[$\delta=10^{-8}$, $\rho_{\rm max}\approx 1201$.]{
\includegraphics[width=63mm,height=58mm]{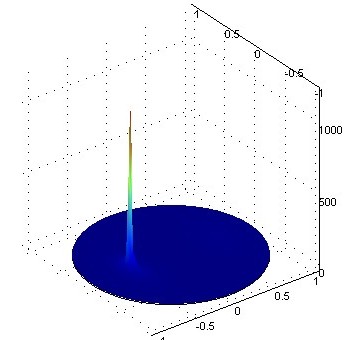}}
\subfigure[$\delta=0$, $\rho_{\rm max}\approx 1212$.]{
\includegraphics[width=63mm,height=58mm]{d0-0.jpg}}
\caption{Cell density at time $t\approx 0.15$ with $n=3/2$.}
\label{fig.ded2}
\end{figure}


\subsection*{The three-dimensional case}

We consider the linear case $m=1$ and prescribe the initial data
$$
  \r_0(x,y,z) = 10+80(x^2+y^2+z^2-1)^2(x-0.4)^2,
  \quad c_0(x,y,z) = 0, \quad (x,y)\in B_1(0),
$$
see Figure \ref{fig.ic2} (left). In three space dimenions, even for the
parabolic-elliptic system, there is no
critical threshold known for the occurence of blow up. A more
complicated functional relation between the second moment and the
$L^{3/2}$ norm of the cell density has been derived in \cite{CPZ04} as a sufficient
condition for finite-time blow up. We observed
that the numerical solution to the fully parabolic model with $\delta=0$
breaks down after time $t\approx 0.46$, which may indicate a blow-up formation
(see Figure \ref{fig.ic2} right).

The initial data in Figure \ref{fig.ic2} is
represented using slices, since the highest values occur inside
the domain, whereas for the following simulations, we use the
level-set representation which is more appropriate for
demonstrating aggregation phenomena on the boundary.

\begin{figure}[ht]
\centering
\subfigure[$t=0$, $\r_{\rm max}\approx 46$.]{
\includegraphics[width=63mm,height=58mm]{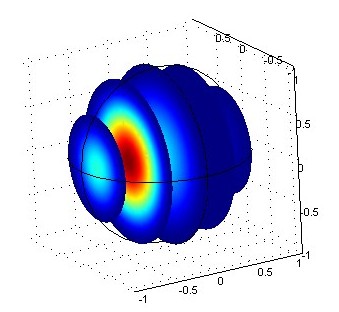}}
\subfigure[$t\approx 0.46$, $\r_{\rm max}\approx 5460$.]{
\includegraphics[width=63mm,height=58mm]{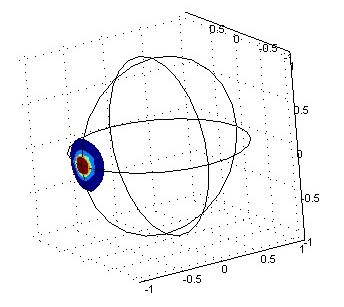}}
\caption{Cell density with $\delta=0$.}
\label{fig.ic2}
\end{figure}

In Figure \ref{fig.n3d} we compare the cell density at time
$t=1000$ with $\delta=0.005$ and
$n=\frac65,\frac{13}{10},\frac32,\frac74$. At $t=1000$, the
solutions have essentially reached the (non-homogeneous) steady
state. As we already proved in Proposition \ref{prop.decay}, when
performing the same simulation with $n=2$, the solution converges
to the homogeneous steady state. As in the two-dimensional
situation, the maximal cell density is achieved at the boundary.

\begin{figure}[ht]
\centering
\subfigure[$n=\frac65$, $\rho_{\rm max}\approx 1023$.]{
\includegraphics[width=63mm,height=58mm]{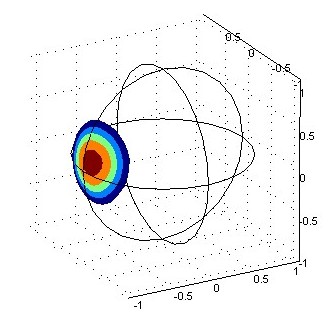}}
\subfigure[$n=\frac{13}{10}$, $\rho_{\rm max}\approx 487$.]{
\includegraphics[width=63mm,height=58mm]{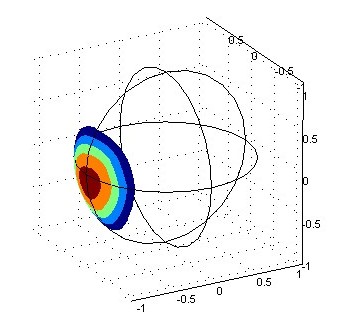}}
\subfigure[$n=\frac32$, $\rho_{\rm max}\approx 158$.]{
\includegraphics[width=63mm,height=58mm]{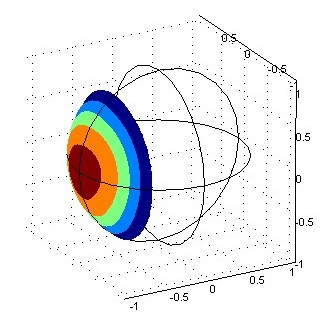}}
\subfigure[$n=\frac74$, $\rho_{\rm max}\approx 58$.]{
\includegraphics[width=63mm,height=58mm]{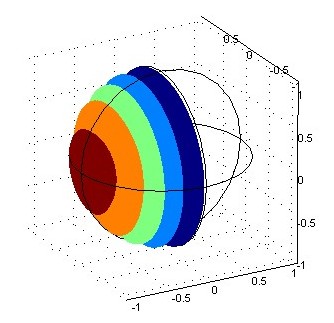}}
\caption{Cell density at time $t=1000$ with $\delta=0.005$.}
\label{fig.n3d}
\end{figure}


\end{document}